\theoremstyle{definition}
  	\newtheorem{theorem}{Theorem}
  	\newtheorem{lemma}[theorem]{Lemma}
	\newtheorem{proposition}[theorem]{Proposition}
  	\newtheorem{corollary}[theorem]{Corollary}
  	\newtheorem{definition}[theorem]{Definition}
  	\newtheorem{remark}[theorem]{Remark}
\newcommand{\R}{\mathbb R}
\newcommand{\Z}{\mathbb Z}
\newcommand{\C}{\mathbb C}
\newcommand{\E}{\mathbb{E}}
\newcommand{\I}{\infty}
\newcommand{\PB}{\mathbb{P}}
\newcommand{\1}{\mathbbm{1}}
\newcommand{\ud}{\, \mathrm{d}}
\DeclareMathOperator{\real}{Re}
\DeclareMathOperator{\imag}{Im}
\DeclareMathOperator{\Mat}{Mat}
\begin{document}

\title{Asymptotic Enumeration of Difference Matrices over Cyclic Groups}
\author{Aaron M. Montgomery \\ Department of Mathematics and Computer Science \\ Baldwin Wallace University}
\maketitle

\begin{abstract}
We identify a relationship between a certain family of random walks on Euclidean lattices and difference matrices over cyclic groups. We then use the techniques of Fourier analysis to estimate the return probabilities of these random walks, which in turn yields the asymptotic number of difference matrices over cyclic groups as the number of columns increases.

\end{abstract}

\section{Introduction}

This paper will explore a connection between random walks and a certain class of combinatorial designs known as difference matrices. Difference matrices have long been a part of the combinatorial design literature, and they are related to many other types of designs such as orthogonal arrays, transversal designs,  pairwise-balanced designs, and more. For instance, a difference matrix over $\Z_2$ is also a partial Hadamard matrix (see, for instance, \cite{wdl_levin}). A comprehensive overview of the existing literature on difference matrices and their relationships with other types of designs can be found in \cite{crc}.

The main result of this work will be to provide an asymptotic count of the difference matrices over cyclic groups as the number of their columns increases. Enumerating combinatorial designs directly is often quite challenging due to the computational complexity involved, and difference matrices are no exception. We will instead relate these matrices to a certain family of random walks on Euclidean lattices, and we will use the tools of Fourier analysis to  estimate the return probability of these random walks. We will then exploit the connection between the two problems to obtain the desired result about difference matrices. One advantage of this approach is that it permits an asymptotic enumeration of these matrices without ever requiring the explicit construction of any such matrix, which can be computationally difficult when the parameters are large.

\begin{definition} \label{defn_dm}
  Let $(G, \odot)$ be a group of order $g$. A \emph{$(g, k; \lambda)$-difference matrix over $G$} is a $k \times g \lambda$ matrix $D = [d_{ij}]$ with entries from $G$, so that for each $1 \leq i < j \leq k$, the multiset $\{d_{i \ell} \odot d_{j \ell}^{-1} : 1 \leq \ell \leq g \lambda\}$ contains every element of $G$ exactly $\lambda$ times.
\end{definition}

In order to simplify the Fourier analysis, in this work we will consider only cases where $G = \Z_g$. Accordingly, we will prefer to use notation such as $d_{i \ell} - d_{j \ell}$ in place of $d_{i \ell} \odot d_{j \ell}^{-1}$. The goal of this paper is to prove the following theorem.

\begin{theorem} \label{main_theorem}
Let $g \geq 2$ be fixed, and suppose $(k, \lambda)$ is a sequence of ordered pairs such that $k \geq 3$, $\lambda \to \I$, and if $g$  is even then each $\lambda$ is also even. Suppose also that there exists some $\varepsilon_0 > 0$ so that $k < (\frac 1 6 - \varepsilon_0) \frac{\log(\lambda g)}{\log(g)}$ for all pairs in the sequence. Then along the sequence $(k, \lambda)$ the number of $(g, k; \lambda)$-difference matrices over $\Z_g$ is 
  \[ \frac{ g^{k \lambda g + \frac{(3k-4)(k-1)}{4}}}{\sqrt{(2 \pi \lambda)^{\binom{k}{2}(g-1)}}}[1 + o(1)].\]
\end{theorem}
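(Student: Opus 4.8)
The plan is to recast $N$, the number of $(g,k;\lambda)$-difference matrices over $\Z_g$, as a rescaled return probability of a random walk and then to evaluate that probability by Fourier inversion on a torus. A $k\times g\lambda$ matrix over $\Z_g$ is an ordered $g\lambda$-tuple of columns $c^{(1)},\dots,c^{(g\lambda)}\in\Z_g^k$, and since translating a column changes no difference $d_{i\ell}-d_{j\ell}$, the relevant data of a column $c$ is the vector $s(c):=\big(\1[c_i-c_j=a]\big)_{1\le i<j\le k,\ a\in\Z_g\setminus\{0\}}\in\{0,1\}^d$, where $d:=(g-1)\binom{k}{2}$. The difference-matrix condition is precisely that $\sum_{\ell=1}^{g\lambda}s(c^{(\ell)})=\lambda\mathbf 1$ (each nonzero residue is hit $\lambda$ times in each pair; residue $0$ is then forced), so $N=g^{kg\lambda}\,\PB[S_{g\lambda}=\lambda\mathbf 1]$, where $S_n=Y_1+\dots+Y_n$ is the walk on $\Z^d$ with i.i.d.\ steps $Y_\ell=s(c^{(\ell)})$ and the $c^{(\ell)}$ independent uniform on $\Z_g^k$. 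Because $c_i-c_j$ is uniform on $\Z_g$ we have $\E[Y_1]=\tfrac1g\mathbf 1$, so the target $\lambda\mathbf 1$ equals $g\lambda\,\E[Y_1]$ exactly: one must compute the probability that the walk lands on its own mean after $g\lambda$ steps. This realises the connection between random walks and difference matrices announced in the abstract.

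Two structural facts will feed the estimate. First, one checks that the single-step covariance $\Sigma=\mathrm{Cov}(Y_1)$ is block diagonal over the $\binom{k}{2}$ pairs: coordinates belonging to two distinct pairs are uncorrelated (a short case analysis according to whether the pairs share $0$ or $1$ common index), while the block attached to a single pair is the covariance of the indicator vector of a uniform element of $\Z_g$, namely $\tfrac1g I_{g-1}-\tfrac1{g^2}J_{g-1}$, of determinant $g^{-g}$; hence $\det\Sigma=g^{-g\binom{k}{2}}$. Second, the walk is supported on a single coset of the sublattice $L\subseteq\Z^d$ generated by $\{s(c)-s(c')\}$, and one must establish $[\Z^d:L]=g^{\binom{k-1}{2}}$. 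The natural route is to identify $L$ with the set of $v\in\Z^d$ whose pair-values $\big(\sum_{a}a\,v_{(i,j),a}\bmod g\big)_{i<j}\in\Z_g^{\binom k2}$ lie in the image of the difference map $\Z_g^k\to\Z_g^{\binom k2}$: the inclusion ``$\subseteq$'' is the triangle identity $(u_i-u_j)+(u_j-u_m)=u_i-u_m$, ``$\supseteq$'' is verified by exhibiting enough differences $s(u)-s(u')$, and since the difference map has kernel the constants, the index is $g^{\binom k2-(k-1)}=g^{\binom{k-1}{2}}$. (Sanity check: for $g=2,\ k=3$ the index is $\big|\det\big[\begin{smallmatrix}1&1&0\\1&0&1\\0&1&1\end{smallmatrix}\big]\big|=2$.)

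With these in hand, the core of the argument is a quantitative local central limit theorem obtained by Fourier inversion:
\[
  \PB[S_{g\lambda}=\lambda\mathbf 1]=\frac1{(2\pi)^d}\int_{[-\pi,\pi]^d}\varphi(\theta)^{g\lambda}\,e^{-i\lambda\langle\mathbf 1,\theta\rangle}\,\mathrm d\theta,
\]
where, parametrising columns by $(c_1-c_k,\dots,c_{k-1}-c_k)$, $\varphi(\theta)=g^{-(k-1)}\sum_{u\in\Z_g^{k-1}}e^{i\langle s(u),\theta\rangle}$. The integrand concentrates near the $[\Z^d:L]=g^{\binom{k-1}{2}}$ points $\theta_0$ of $(2\pi L^{*})/(2\pi\Z^d)$ at which $|\varphi|=1$; writing $\theta=\theta_0+\psi$ one gets $\varphi(\theta_0+\psi)^{g\lambda}=(\text{phase})\cdot e^{i\lambda\langle\mathbf 1,\psi\rangle}\big(1-\tfrac12\langle\psi,\Sigma\psi\rangle+\cdots\big)^{g\lambda}$, the factor $e^{i\lambda\langle\mathbf 1,\psi\rangle}$ cancels the $e^{-i\lambda\langle\mathbf 1,\psi\rangle}$ from the inversion, and the residual ``phase'' is trivial because $\lambda(gs_0-\mathbf 1)\in L$: its pair-values are all $\equiv\lambda\binom g2\pmod g$, which is $0$ exactly when $g$ is odd or $\lambda$ is even. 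Thus each of the $g^{\binom{k-1}{2}}$ neighbourhoods contributes the same Gaussian mass $\big((2\pi g\lambda)^d\det\Sigma\big)^{-1/2}(1+o(1))$, and after showing the rest of the torus is negligible one obtains $\PB[S_{g\lambda}=\lambda\mathbf 1]=g^{\binom{k-1}{2}}\big((2\pi g\lambda)^d\det\Sigma\big)^{-1/2}(1+o(1))$. Multiplying by $g^{kg\lambda}$ and inserting $\det\Sigma=g^{-g\binom k2}$, $d=(g-1)\binom k2$, the exponent of $g$ collapses to $kg\lambda+\binom{k-1}{2}+\tfrac12\binom{k}{2}=kg\lambda+\tfrac{(3k-4)(k-1)}{4}$, while $(2\pi g\lambda)^d=(2\pi\lambda)^dg^d$ produces the denominator $\sqrt{(2\pi\lambda)^{\binom k2(g-1)}}$, giving the claimed formula. (That the residual phase fails precisely when $g$ is even and $\lambda$ odd is consistent with the known nonexistence of such difference matrices: then $gs_0-\mathbf 1$ and its $\lambda$-multiple have the nonzero constant pair-value $\tfrac g2$, which is not a coboundary when $k\ge3$, so $\lambda\mathbf 1$ lies outside the support coset and $N=0$.)

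The main obstacle will be making this local limit theorem uniform: the dimension $d\asymp g\binom k2$ and the size $g^{k-1}$ of the step support both grow with $k$, so no fixed LCLT and no crude torus bound will do, and every error term must be weighed against the step count $g\lambda$. Concretely one needs (i) Taylor/Edgeworth control of $\varphi$ near each bad point with relative error $o(1)$ uniformly in $k$, which forces $g\lambda$ to dominate a fixed power of $g^{k}$; and (ii) a bound $|\varphi(\theta)|\le 1-\delta_k$ on the complement of small neighbourhoods of the $g^{\binom{k-1}{2}}$ bad points, with $\delta_k$ not decaying too fast, so that $\int|\varphi|^{g\lambda}$ falls far below the main term. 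Balancing (i) and (ii) is exactly what the hypothesis $k<(\tfrac16-\varepsilon_0)\tfrac{\log(\lambda g)}{\log g}$, i.e.\ $g^{6k}=o(g\lambda)$, is designed to afford; obtaining an exponent this generous rather than a far more restrictive one is the delicate part of the analysis, and the lattice-index identity $[\Z^d:L]=g^{\binom{k-1}{2}}$ is the other place where a genuinely new, if elementary, computation is required.
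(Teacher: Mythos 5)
Your proposal follows essentially the same route as the paper: both encode the count as a $g^{kg\lambda}$-scaled lattice-hitting probability, both invert by Fourier, both compute the same block covariance with $\det\Sigma=g^{-g\binom{k}{2}}$, both identify $g^{\binom{k-1}{2}}$ mass-carrying frequencies, and both must balance the same error terms against $t=g\lambda$ to deduce the stated exponent of $g$. The one formal difference is cosmetic: you keep the uncentered step $s(c)\in\{0,1\}^d$ and characterize the step lattice $L\subset\Z^d$ directly via the coboundary condition on pair-values, while the paper centers the step (its $Z(\vec x)$ is your $s(c)-\tfrac1g\mathbf 1$) and instead characterizes the Pontryagin-dual set $\Lambda=\{|\Phi|=1\}$ by exhibiting explicit generating vectors $\vec\alpha^{\{i,j\}}$; both yield the index $g^{\binom{k-1}{2}}$, and your exponent arithmetic, parity discussion, and $g=2,\,k=3$ sanity check are all correct.
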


We remark here that the prohibition on the existence of a $(g, k; \lambda)$-difference matrix over $\Z_g$ with $k > 2$, $g$ even, and $\lambda$ odd is due to Drake \cite{drake}, although the Fourier analysis involved in the proof of Theorem \ref{main_theorem} will suggest this as well. We also note that Jungnickel showed that the existence of a $(g, k; \lambda)$-difference matrix over $\Z_g$ requires that $k \leq \lambda g$ \cite{Jungnickel1979}. Consequently, an ideal version of Theorem \ref{main_theorem} would permit $k$ to grow as a near-linear function of $\lambda$, whereas this version permits $k$ only to grow as a logarithmic function of $\lambda$. We do not claim any technical or number-theoretic reason for this restriction; rather, it is made as a matter of practicality to complete the Fourier analysis, and it could almost surely be improved.

To prove Theorem \ref{main_theorem}, we will consider a randomly-generated $k \times g \lambda$ matrix with entries chosen uniformly and independently from $\Z_g$; we will then compute the probability that such a matrix satisfies Definition \ref{defn_dm}. The columns of the matrix will correspond to the steps of a random walk on a certain high-dimensional Euclidean lattice, and the existence of a $(g, k; \lambda)$-difference matrix over $\Z_g$ will correspond to a return path of the random walk to the origin. We will prove a local central limit theorem to provide estimates on the return probability of the random walk, which will in turn yield estimates on the numbers of these matrices. 

We pause to remark that this type of analysis is certainly not new to the study of combinatorial designs. De Launey and Levin used this tactic to study partial Hadamard matrices \cite{wdl_levin}, and their enumeration results can be recognized as the particular case of Theorem \ref{main_theorem} where $g = 2$, although their work shows the formula to be valid so long as the pairs $(k, \lambda)$ satisfy the more generous condition $k \leq (2 \lambda)^{1/(12 + \varepsilon_0)}$. Additionally, the author of this work has used this strategy to count balanced incomplete block design incidence matrices \cite{montgomery_thesis}*{Thm 2.3}. Kuperberg, Lovett, and Peled  used a slightly different random walk approach to enumerate simple orthogonal arrays and simple $t$-designs \cite{klp}. Many other works exist which perform asymptotic enumerations of combinatorial structures by finding suitable estimates on complex integrals; see \cites{gmw, isaev, canfield_mckay, barvinok_hartigan, mckay_wormald} for a few of the numerous examples. 

We now define the random walk and identify its correspondence to difference matrices. With $(\Z_g)^k$ regarded as a column vector with entries in $\Z_g$, we will define a map $Z \colon (\Z_g)^k \to \R^{\binom k 2 \cdot (g-1)}$. The Euclidean space is to be regarded as indexed by two coordinates, where the first coordinate is an unordered pair of rows (i.e. $\{i, j\}$ with $1 \leq i < j \leq k$) and the second coordinate is an element of $\Z_g$ besides $0$. The ordering of the indices will be assumed to be lexicographic. Let $\vec x = (x_1, \dots, x_k)^T$ be an element of $(\Z_g)^k$; then the map $Z$ is defined by
\begin{equation} \label{defn_map}
  [Z(\vec x)]_{\{i,j\},a} =
    \begin{cases}
      1 -1/g, & \textrm{ if } x_i - x_j = a \\
      -1/g, & \textrm{ if } x_i - x_j \neq a.
    \end{cases}
\end{equation}
As an example, if $g = 3$ and $k = 4$, then the codomain $\R^{12}$ of the map $Z$ is understood to be indexed in the order $(\{1,2\},1), (\{1,2\},2), (\{1,3\},1), \dots, (\{3,4\},2);$ the vector $(2, 1, 0, 2)^T$ would map to $\frac{1}{3} \cdot (2, -1, -1, 2, -1, -1, 2, -1, -1, 2, 2, -1)$ under $Z$.
We remark that in the index scheme, for each pair of rows $\{i,j\}$, the coordinate $(\{i,j\},0)$ is intentionally omitted. The purpose of the function $Z$ is that if $D = [\vec x_1, \dots, \vec x_t]$ is a $k \times t$ matrix with entries in $\Z_g$, then a given coordinate $\{i, j\}, a$ of the expression $Z(\vec x_1) + \dots + Z(\vec x_t)$ will be $\vec 0$ if and only if $t$ is a multiple of $g$ and the multiset $\{x_{i \ell} - x_{j \ell} : 1 \leq \ell \leq t\}$ contains the element $a$ exactly $t/g$ times. Consequently, we have $Z(\vec x_1) + \dots + Z(\vec x_t) = \vec 0$ if and only if every element of $\{1, \dots, g-1\} \subset \Z_g$ appears exactly $t/g$ times in the multiset $\{x_{i \ell} - x_{j \ell} : 1 \leq \ell \leq t\}$ for every pair $\{i,j\}$; the latter condition also implies that $0$ also appears exactly $t/g$ times. 

\begin{definition}
  Let $\{X_t\}$ be the random walk defined with increments drawn uniformly and independently from $\{Z(\vec x): \vec x \in (\Z_g)^k\}$.
\end{definition}

If $D = [\vec x_1, \dots, \vec x_t]$ is a $k \times t$ matrix with elements taken uniformly and independently from $\Z_g$, then the preceding remarks imply that $D$ is a difference matrix if and only if $Z(\vec x_1) + \dots + Z(\vec x_t) = \vec 0$; that is, the corresponding random walk has returned to the origin. Since $Z$ is invariant under the action of adding any element to each entry of $\vec x$, there is a $g^t$ to $1$ correspondence between $\Mat_{k \times t}(\Z_g)$ and paths of the random walk $\{X_t\}$. There is also a $g^t$ to $1$ correspondence between $(g, k ; \lambda)$-difference matrices over $\Z_g$ and return paths of $\{X_t\}$ to $\vec 0$, which implies the following relationship:
\begin{equation*}
  \frac{\# \, (g, k; \lambda)\textrm{-difference matrices over $\Z_g$}}{|\Mat_{k \times \lambda g}(\Z_g)|} = \frac{\textrm{\# return paths of $X_t$ to $\vec 0$}}{\textrm{\# all paths of $X_t$}}
\end{equation*}
The right side of that equation is merely $\PB(X_t = \vec 0)$, and the denominator on the left is $g^{k \lambda g}$. Therefore, we have
\begin{equation} \label{walk_correspondence}
  \# \, (g, k; \lambda)\textrm{-difference matrices over $\Z_g$} = g^{k \lambda g} \PB(X_t = \vec 0).
\end{equation}
Hence, in order to estimate the number of $(g, k; \lambda)$-differences matrices over $\Z_g$, we need only to estimate the return probability of $\{X_t\}$ to $\vec 0$.

We will estimate $\PB(X_t = \vec 0)$ by using the standard tactics of Fourier analysis. In all that follows, we let $d = \binom k 2 \cdot (g-1)$. We define the characteristic function $\Phi: \R^d \to \C$ by
  \[ \Phi(\vec \theta) = \E \left[e^{i \vec \theta \cdot X_1}\right] = \sum_{\vec x \in (\Z_g)^k} g^{-k} e^{i \vec \theta \cdot Z(\vec x)}. \]
One can verify that $|\Phi(\vec \theta)|$ is $2\pi$-periodic, and that if $t$ is a multiple of $g$, then $\Phi(\vec \theta)^t$ is also $2\pi$-periodic.
 Consequently, if $t$ is a multiple of $g$, then the walk $X_t$ is supported on the integer lattice $\Z^d \subset \R^d$, which permits use of the Fourier inversion formula (see, for instance, \cite{spitzer}*{P3, p.\ 57}):
  \begin{equation} \label{fourier_inversion}
    \PB(X_t = \vec 0) = (2 \pi)^{-d} \int_{[-\pi, \pi)^d} \Phi(\vec \theta)^t \ud \vec \theta .
  \end{equation}
To estimate this integral, we will partition $[-\pi, \pi)^d$ based on the value of $|\Phi(\vec \theta)|$ by dealing separately with regions where $|\Phi(\vec \theta)|$ is close to $1$ and those where it is not. Intuitively, as $t \to \I$, the contributions to the integral from the former regions should become dominant, while those from the latter regions should become negligible. 

It is noteworthy that, in principle, combining \eqref{walk_correspondence} with \eqref{fourier_inversion} provides an exact count of the number of $(g, k; \lambda)$-difference matrices over $\Z_g$. However, in practice this is difficult to exploit because the 
complicated nature of $\Phi(\vec \theta)^t$ makes the integral intractibly difficult. Consequently, it will be preferable to estimate $\Phi(\vec \theta)^t$ instead of calculating it directly.
We also note that a common approach to the general problem of estimating return probabilities of a random walk is to transform the walk to a strongly aperiodic random walk on an integer lattice. However, this tactic is difficult here because of the complicated structure of the increment set $\{Z(\vec x): \vec x \in (\Z_g)^k\}$. These challenges motivate the Fourier-analytic approach used in this work.

The outline of the rest of this paper is as follows: in Section \ref{S: anatomy}, we discuss how to decompose the integral in \eqref{fourier_inversion} into manageable pieces. Sections \ref{S: remainder} and \ref{S: primary} are devoted to finding estimates on the integral where $|\Phi(\vec \theta)|$ is and is not close to $1$, respectively. Finally, in Section \ref{S: proof} we combine all the pieces to prove Theorem \ref{main_theorem}.
 
\section{Anatomy of the Integral} \label{S: anatomy}
Let $\Lambda = \{\vec \theta \in \R^d : |\Phi(\vec \theta)| = 1\}$. Our goal in this section is to characterize the set $\Lambda$, and then to use this characterization to partition the integral in \eqref{fourier_inversion} in a suitable way. We observe that
\[ \vec \theta \in \Lambda \iff \textrm{ for all } \vec x, \vec y \in (\Z_g)^k,\,  e^{i \vec \theta \cdot Z(\vec x)} = e^{i \vec \theta \cdot Z(\vec y)} \]
or equivalently, 
\begin{equation} \label{2pi_equivalent}
  \vec \theta \in \Lambda \iff \textrm{ for all } \vec x, \vec y \in (\Z_g)^k,\,  \vec \theta \cdot Z(\vec x) \equiv \vec \theta \cdot Z(\vec y) \pmod{2 \pi}.
\end{equation}
This expression shows that $\Lambda$ is closed under addition and under negation, so $\Lambda$ is a subgroup of the additive group $\R^d$. In the sequel, any reference to vectors in $\R^d$ being equivalent modulo $2 \pi$ should be understood to mean that their respective components are equivalent to one another modulo $2 \pi$. We remark that $\Phi^t$ is $2 \pi$-periodic in this sense if $t$ is a multiple of $g$.

In order to characterize $\Lambda$, we first observe a useful fact.
\begin{lemma} \label{taurus_lemma_general}
  Let $a, b \in \Z_g$ and let $1 \leq i < j \leq k$. Suppose that $\vec \theta \in \R^d$ and $\epsilon > 0$ have the property that for any pair of vectors $\vec x, \vec y \in (\Z_g)^k$, there are $z \in \Z$ and $\epsilon_1$ with $|\epsilon_1| < \epsilon$ such that $\vec \theta \cdot Z(\vec x) - \vec \theta \cdot Z(\vec y) = 2 \pi z + \epsilon_1$. Then there exists $\epsilon_2$ with $|\epsilon_2| < 2 \epsilon$ such that
    \[ \theta_{\{i,j\},a} + \theta_{\{i,j\},b} \equiv \theta_{\{i,j\},a+b} + \epsilon_2 \pmod{2 \pi}\]
where, if necessary, the undefined value $\theta_{\{i,j\},0}$ is understood to be 0.
\end{lemma}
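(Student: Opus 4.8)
The plan is first to rewrite $\vec\theta \cdot Z(\vec x) - \vec\theta \cdot Z(\vec y)$ in a transparent combinatorial form, and then to apply the hypothesis to two carefully chosen pairs of vectors whose ``off-diagonal'' contributions cancel when the resulting equations are added, leaving precisely the relation we want.

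\emph{Reduction.} By \eqref{defn_map} the summand $-1/g$ appearing in $[Z(\vec x)]_{\{i',j'\},a}$ does not depend on $\vec x$, so $[Z(\vec x)]_{\{i',j'\},a} - [Z(\vec y)]_{\{i',j'\},a} = \1[x_{i'} - x_{j'} = a] - \1[y_{i'} - y_{j'} = a]$. Pairing with $\vec\theta$ and using the convention $\theta_{\{i',j'\},0} = 0$ (so that $\sum_{a \neq 0}\theta_{\{i',j'\},a}\1[x_{i'}-x_{j'}=a] = \theta_{\{i',j'\},\, x_{i'}-x_{j'}}$ in all cases), one obtains
\[ \vec\theta \cdot Z(\vec x) - \vec\theta \cdot Z(\vec y) = \sum_{1 \le i' < j' \le k} \bigl( \theta_{\{i',j'\},\, x_{i'} - x_{j'}} - \theta_{\{i',j'\},\, y_{i'} - y_{j'}} \bigr). \]
In particular, if $\vec x$ and $\vec y$ agree outside the $j$-th coordinate, only the $k-1$ row-pairs that contain $j$ survive in this sum.

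\emph{Choice of test vectors and cancellation.} Fix $\{i,j\}$ and $a,b \in \Z_g$. Let $\vec u$ be the vector with $i$-th entry $a$ and all other entries $0$; let $\vec v$ be the vector with $i$-th entry $a$, $j$-th entry $-b$, and all other entries $0$; and let $\vec w$ be the vector with $j$-th entry $-b$ and all other entries $0$. Apply the hypothesis to the pairs $(\vec u, \vec v)$ and $(\vec w, \vec 0)$, obtaining integers $z_1, z_2$ and errors $\epsilon^{(1)}, \epsilon^{(2)}$ with $|\epsilon^{(1)}|, |\epsilon^{(2)}| < \epsilon$ such that $\vec\theta \cdot Z(\vec u) - \vec\theta \cdot Z(\vec v) = 2\pi z_1 + \epsilon^{(1)}$ and $\vec\theta \cdot Z(\vec w) - \vec\theta \cdot Z(\vec 0) = 2\pi z_2 + \epsilon^{(2)}$. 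In $(\vec u, \vec v)$ the two vectors differ only in the $j$-th coordinate, with values $0$ and $-b$; in $(\vec w, \vec 0)$ they again differ only in the $j$-th coordinate, with the same two values $-b$ and $0$. Hence for each row-pair $\{j,\ell\}$ with $\ell \notin \{i,j\}$, its contribution to the left side of the first equation (via the formula from the reduction step) is exactly the negative of its contribution to the second, and these cancel upon addition. The only surviving terms come from the row-pair $\{i,j\}$: in the first equation $\vec u$ gives $u_i - u_j = a$ and $\vec v$ gives $v_i - v_j = a+b$, contributing $\theta_{\{i,j\},a} - \theta_{\{i,j\},a+b}$; in the second, $\vec w$ gives $w_i - w_j = b$ and $\vec 0$ gives $0$, contributing $\theta_{\{i,j\},b} - \theta_{\{i,j\},0} = \theta_{\{i,j\},b}$. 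Adding the two equations therefore gives
\[ \theta_{\{i,j\},a} + \theta_{\{i,j\},b} - \theta_{\{i,j\},a+b} = 2\pi(z_1 + z_2) + \bigl(\epsilon^{(1)} + \epsilon^{(2)}\bigr), \]
and setting $\epsilon_2 = \epsilon^{(1)} + \epsilon^{(2)}$ we get $|\epsilon_2| < 2\epsilon$ by the triangle inequality; rearranging yields the claimed congruence. (When $a+b \equiv 0 \pmod g$, the convention $\theta_{\{i,j\},0}=0$ is again what makes $\theta_{\{i,j\},a+b}$ meaningful.)

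\emph{On the difficulty.} There is no deep obstacle here; the substance of the lemma is that the hypothesized near-homomorphism property of $\vec\theta \cdot Z(\cdot)$, which a priori couples all $\binom{k}{2}$ row-pairs together, can be decoupled one row-pair at a time. The one point demanding care is arranging the two pairs of test vectors so that every contribution from a row-pair other than $\{i,j\}$ cancels in the sum; this is precisely why two applications of the hypothesis (and hence the factor $2$ in $|\epsilon_2| < 2\epsilon$) are needed. The argument is uniform in $k \ge 2$, with the $k=2$ case being the degenerate one in which there are no extraneous row-pairs to cancel.
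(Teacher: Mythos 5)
Your proof is correct and follows essentially the same route as the paper's: your test vectors $\vec u, \vec v, \vec w, \vec 0$ are exactly the paper's $\vec x_1, \vec x_3, \vec x_2, \vec x_4$, you apply the hypothesis to the same two pairs, and you add to obtain the same cancellation. The only cosmetic differences are that you state the telescoped form of $\vec\theta \cdot Z(\vec x) - \vec\theta \cdot Z(\vec y)$ up front and fold the $a+b \equiv 0$ case into the convention $\theta_{\{i,j\},0}=0$ rather than treating it separately.
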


The interpretation of this lemma is that if $\vec \theta \cdot Z(\vec x)$ is nearly independent (modulo $2 \pi$) of $\vec x$, then the coordinates of $\vec \theta$ nearly satisfy a certain group homorphism property. This lemma is quite technical, and we delay its proof to examine several useful corollaries. The first corollary is also quite technical and will be used in section \ref{S: remainder}.

\begin{corollary} \label{mod_2pi_g_general}
  Suppose $\vec \theta \in \R^d$ and $\epsilon > 0$ have the property that for any pair of vectors $\vec x, \vec y \in (\Z_g)^k$, there are $z \in \Z$ and $\epsilon_1$ with $|\epsilon_1| < \epsilon$ such that $\vec \theta \cdot Z(\vec x) - \vec \theta \cdot Z(\vec y) = 2 \pi z + \epsilon_1$. Then for any $\{i,j\},a$, there are $z \in \Z$ and $\epsilon_3$ with $|\epsilon_3| < 2 \epsilon$ such that 
  \[ \theta_{\{i,j\},a} - \epsilon_3 = \frac{2 \pi}{g} z.\]
\end{corollary}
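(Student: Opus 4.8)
The plan is to apply Lemma \ref{taurus_lemma_general} repeatedly to turn the approximate homomorphism relation $\theta_{\{i,j\},a} + \theta_{\{i,j\},b} \equiv \theta_{\{i,j\},a+b} \pmod{2\pi}$ (up to a small error) into an approximate statement that $g \cdot \theta_{\{i,j\},a}$ is a near-multiple of $2\pi$. Fix a pair $\{i,j\}$ and an element $a \in \Z_g$. First I would use Lemma \ref{taurus_lemma_general} to get, for each integer $1 \le m \le g-1$, some $\epsilon_2^{(m)}$ with $|\epsilon_2^{(m)}| < 2\epsilon$ such that
\[
  \theta_{\{i,j\},ma} + \theta_{\{i,j\},a} \equiv \theta_{\{i,j\},(m+1)a} + \epsilon_2^{(m)} \pmod{2\pi},
\]
with the convention $\theta_{\{i,j\},0} = 0$. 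Telescoping these relations from $m = 1$ up to $m = g-1$ and using that $ga \equiv 0 \pmod g$, so that $\theta_{\{i,j\},ga} = \theta_{\{i,j\},0} = 0$, yields
\[
  g \cdot \theta_{\{i,j\},a} \equiv \sum_{m=1}^{g-1} \epsilon_2^{(m)} \pmod{2\pi}.
\]

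The point of this telescoping is that the sum of errors has absolute value less than $2(g-1)\epsilon$, and since $g$ is a fixed constant this is still $O(\epsilon)$. Writing $\epsilon_4 = \sum_{m=1}^{g-1}\epsilon_2^{(m)}$, there is then an integer $w$ with $g\,\theta_{\{i,j\},a} = 2\pi w + \epsilon_4$, i.e. $\theta_{\{i,j\},a} = \frac{2\pi}{g}w + \frac{\epsilon_4}{g}$. Taking $\epsilon_3 = \epsilon_4/g$ gives $|\epsilon_3| < \frac{2(g-1)}{g}\epsilon < 2\epsilon$, so $\theta_{\{i,j\},a} - \epsilon_3 = \frac{2\pi}{g}w$, which is exactly the claimed form with $z = w$.

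One subtlety to handle carefully is the bookkeeping of the error bound: a naive telescoping accumulates $g-1$ copies of the $2\epsilon$ bound, giving $|\epsilon_4| < 2(g-1)\epsilon$, and one must then divide by $g$ to land back inside the stated bound $|\epsilon_3| < 2\epsilon$. Since $\frac{2(g-1)}{g} < 2$, this works, but the argument does genuinely rely on the final division by $g$ (equivalently, on the homomorphism relation closing up after exactly $g$ steps). The other point to verify is that the hypothesis of Corollary \ref{mod_2pi_g_general} is literally the hypothesis of Lemma \ref{taurus_lemma_general} for every choice of $a, b, i, j$, so each application is legitimate; this is immediate since the hypothesis quantifies over all $\vec x, \vec y \in (\Z_g)^k$ and does not depend on $a, b, i, j$. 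I expect the only real obstacle here to be the error accounting just described; the structural content is a routine telescoping of the near-homomorphism property supplied by the preceding lemma.
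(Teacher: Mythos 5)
Your argument is correct and matches the paper's proof essentially verbatim: the paper also iterates Lemma \ref{taurus_lemma_general} (``it follows inductively'') to obtain $g\,\theta_{\{i,j\},a} = 2\pi z + \epsilon_g$ with $|\epsilon_g| < 2(g-1)\epsilon$, then divides by $g$ to land inside the $2\epsilon$ bound. Your telescoping write-up simply makes the induction explicit.
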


\begin{proof}
  If $\vec \theta$ and $\epsilon$ satisfy the hypotheses of Lemma \ref{taurus_lemma_general}, then since $ga = 0$ it follows inductively that $g \theta_{\{i,j\},a} = 2 \pi z + \epsilon_{g}$, where $z \in \Z$ and $|\epsilon_g| < 2 (g - 1) \epsilon$ by the triangle inequality. If we set $\epsilon_3 = \epsilon_{g}/g$, then $\theta_{\{i,j\},a} = \frac{2 \pi}{g} z + \epsilon_3$, where $|\epsilon_3| < 2\frac{g-1}{g} \epsilon < 2\epsilon$.
\end{proof}

The second corollary can be obtained by letting $\epsilon \to 0$ in Lemma \ref{taurus_lemma_general}.

\begin{corollary}\label{taurus_lemma}
  Let $a, b \in \Z_g$ and let $1 \leq i < j \leq k$. If $\vec \theta \in \Lambda$, then
  \[ \theta_{\{i, j\}, a} + \theta_{\{i, j\}, b} \equiv \theta_{\{i, j\}, a + b} \pmod{2 \pi} \]
  where, if necessary, the undefined value $\theta_{\{i,j\},0}$ is understood to be 0.
\end{corollary}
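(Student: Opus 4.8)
The plan is to obtain this corollary directly from Lemma \ref{taurus_lemma_general} by the limiting argument hinted at in the remark preceding the statement. First I would note that if $\vec \theta \in \Lambda$, then by \eqref{2pi_equivalent} we have $\vec \theta \cdot Z(\vec x) \equiv \vec \theta \cdot Z(\vec y) \pmod{2\pi}$ for all $\vec x, \vec y \in (\Z_g)^k$; that is, $\vec \theta \cdot Z(\vec x) - \vec \theta \cdot Z(\vec y)$ is an exact integer multiple of $2\pi$. Consequently, for \emph{every} $\epsilon > 0$ the hypotheses of Lemma \ref{taurus_lemma_general} are met (take $\epsilon_1 = 0$ and $z$ the appropriate integer), and the lemma yields some $\epsilon_2 = \epsilon_2(\epsilon)$ with $|\epsilon_2| < 2\epsilon$ such that
\[ \theta_{\{i,j\},a} + \theta_{\{i,j\},b} \equiv \theta_{\{i,j\},a+b} + \epsilon_2 \pmod{2 \pi}, \]
with the convention $\theta_{\{i,j\},0} = 0$ applied where needed.

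Next I would fix the real number $c = \theta_{\{i,j\},a} + \theta_{\{i,j\},b} - \theta_{\{i,j\},a+b}$, which does not depend on $\epsilon$. The displayed congruence says precisely that $c - \epsilon_2(\epsilon) \in 2\pi \Z$ for each $\epsilon > 0$, so we may write $c = 2\pi m_\epsilon + \epsilon_2(\epsilon)$ for some $m_\epsilon \in \Z$. Letting $\epsilon \to 0$ forces $\epsilon_2(\epsilon) \to 0$, hence $2\pi m_\epsilon \to c$; since $\{2\pi m_\epsilon\}$ lies in the discrete set $2\pi \Z$ and converges, it is eventually constant and its limit $c$ must itself lie in $2\pi \Z$. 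Therefore $c \equiv 0 \pmod{2\pi}$, which is the asserted homomorphism relation.

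This deduction is essentially routine once Lemma \ref{taurus_lemma_general} is available, so I do not anticipate any real obstacle here; the only point deserving a sentence of care is the discreteness step — that a convergent sequence of multiples of $2\pi$ converges to a multiple of $2\pi$. Equivalently, one can observe that for any $\epsilon < \pi$ the integer $m_\epsilon$ is uniquely determined by $|c - 2\pi m_\epsilon| < 2\epsilon$, so shrinking $\epsilon$ pins $m_\epsilon$ to a single value independent of $\epsilon$ and then sends the error to $0$.
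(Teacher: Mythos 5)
Your proposal is correct and is exactly the argument the paper has in mind: the paper's own proof is the single sentence that Corollary \ref{taurus_lemma} ``can be obtained by letting $\epsilon \to 0$ in Lemma \ref{taurus_lemma_general},'' and you have filled in precisely that limiting argument, observing that membership in $\Lambda$ makes the hypothesis of the lemma hold for every $\epsilon > 0$ (with $\epsilon_1 = 0$) and then using the discreteness of $2\pi\Z$ to conclude. The only tiny slip is in your parenthetical alternative: to pin down $m_\epsilon$ uniquely from $|c - 2\pi m_\epsilon| < 2\epsilon$ you need $\epsilon < \pi/2$ rather than $\epsilon < \pi$, but this does not affect your main argument, which is sound as written.
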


The third corollary is an immediate consequence of Corollary \ref{taurus_lemma}.

\begin{corollary} \label{mod_2pi_g}
  If $\vec \theta \in \Lambda$, then for every $\{i, j\},a$, we have $\theta_{\{i,j\},a} \equiv 0 \pmod{\frac{2\pi}{\nu}}$, where $\nu$ is the order of $a$. In particular, for all nonzero elements $a$, it holds that $\theta_{\{i,j\},a} \equiv 0 \pmod{\frac{2\pi}{g}}$.
\end{corollary}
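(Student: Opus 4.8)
The plan is to deduce this directly from the near-homomorphism relation of Corollary \ref{taurus_lemma} applied to the single pair of rows $\{i,j\}$ and a fixed nonzero $a \in \Z_g$. The key observation is that if $a$ has order $\nu$, then iterating the relation $\theta_{\{i,j\},a} + \theta_{\{i,j\},b} \equiv \theta_{\{i,j\},a+b} \pmod{2\pi}$ shows that the map $m \mapsto \theta_{\{i,j\},ma}$ (with $\theta_{\{i,j\},0}$ read as $0$) is itself a homomorphism from the cyclic subgroup $\langle a \rangle \cong \Z_\nu$ into $\R/2\pi\Z$; in particular $\theta_{\{i,j\},ma} \equiv m \theta_{\{i,j\},a} \pmod{2\pi}$ for every integer $m$.

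First I would fix the pair $\{i,j\}$ and the element $a$ of order $\nu$, and prove by induction on $m \geq 1$ that $\theta_{\{i,j\},ma} \equiv m\,\theta_{\{i,j\},a} \pmod{2\pi}$: the base case $m=1$ is trivial, and the inductive step applies Corollary \ref{taurus_lemma} with $b = (m-1)a$, noting that the identification of $\theta_{\{i,j\},0}$ with $0$ is exactly what makes the formula valid when $ma \equiv 0$. Then I would take $m = \nu$, so that $\nu a = 0$ in $\Z_g$ and hence $\theta_{\{i,j\},\nu a}$ is the value $0$; the relation gives $\nu\,\theta_{\{i,j\},a} \equiv 0 \pmod{2\pi}$, which is precisely $\theta_{\{i,j\},a} \equiv 0 \pmod{\frac{2\pi}{\nu}}$. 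Finally, since every nonzero $a \in \Z_g$ has order $\nu$ dividing $g$, the congruence $\theta_{\{i,j\},a} \equiv 0 \pmod{\frac{2\pi}{\nu}}$ immediately implies $\theta_{\{i,j\},a} \equiv 0 \pmod{\frac{2\pi}{g}}$, giving the "in particular" clause.

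There is really no substantial obstacle here: the statement is flagged in the excerpt as "an immediate consequence of Corollary \ref{taurus_lemma}," and the only point requiring any care is the bookkeeping around the convention $\theta_{\{i,j\},0} = 0$ when the index $ma$ wraps around to $0$ in $\Z_g$ during the induction. One should simply check that the induction is stated for integer multiples $m$ (not residues), so that the homomorphism identity $\theta_{\{i,j\},ma} \equiv m\theta_{\{i,j\},a} \pmod{2\pi}$ holds at $m=\nu$ with the left side equal to $0$ by convention, and that is enough to conclude.
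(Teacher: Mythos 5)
Your proof is correct and is essentially the intended argument, which the paper leaves implicit by declaring Corollary~\ref{mod_2pi_g} an ``immediate consequence'' of Corollary~\ref{taurus_lemma}. It is the exact-arithmetic analogue of the paper's proof of Corollary~\ref{mod_2pi_g_general} (iterate the near-homomorphism relation and divide by the iteration count), with the iteration count chosen to be the order $\nu$ of $a$ rather than $g$, which is exactly what is needed to get the stronger $\frac{2\pi}{\nu}$ conclusion; the deduction of the ``in particular'' clause from $\nu \mid g$ is also correct.
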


\begin{proof}[Proof of Lemma \ref{taurus_lemma_general}]
  Fix elements $a, b \in \Z_g$ and integers $i, j$ with $1 \leq i < j \leq k$. If either $a$ or $b$ is $0$, then the result is trivial; we assume first that $a, b,$ and $a + b$ are all nonzero. We define four vectors in $(\Z_g)^k$:
    \begin{align*}
      \vec x_1 = (0, \dots, 0, a, 0, \dots, 0&, 0, 0, \dots, 0)^T \\
      \vec x_2 = (0, \dots, 0, 0, 0, \dots, 0&, -b, 0, \dots, 0)^T \\
      \vec x_3 = (0, \dots, 0, a, 0, \dots, 0&, -b, 0, \dots, 0)^T \\
      \vec x_4 = (0, \dots, 0, 0, 0, \dots, 0&, 0, 0, \dots, 0)^T
    \end{align*}
The $a$ in $\vec x_1$ and $\vec x_3$ occurs in the $i^{\textrm{th}}$ position, and the $-b$ in $\vec x_2$ and $\vec x_3$ occurs in the $j^{\textrm{th}}$ position. Let $\vec \theta \in \R^d$. We will use $\vec 1$ to represent the vector in $\R^d$ of all ones. The following calculations are straightforward from Definition \ref{defn_map}:
  \begin{align*}
    \vec \theta \cdot Z(\vec x_1) &= \sum_{m < i} \theta_{\{m,i\},-a} + \sum_{m > i} \theta_{\{i,m\},a} - \frac{1}{g} \vec \theta \cdot \vec 1 \\
    \vec \theta \cdot Z(\vec x_2) &= \sum_{m < j} \theta_{\{m,j\},b} + \sum_{m > j} \theta_{\{j,m\},-b} - \frac{1}{g} \vec \theta \cdot \vec 1 \\
    \vec \theta \cdot Z(\vec x_3) &= \sum_{m < i} \theta_{\{m,i\},-a} + \sum_{\substack{m > i \\ m \neq j}} \theta_{\{i,m\},a} + \sum_{\substack{m < j \\ m \neq i}} \theta_{\{m,j\},b} + 
    \sum_{m > j} \theta_{\{j,m\},-b} + \theta_{\{i,j\},a+b}- \frac{1}{g} \vec \theta \cdot \vec 1 \\
    \vec \theta \cdot Z(\vec x_4) &= -\frac{1}{g} \vec \theta \cdot \vec 1
  \end{align*}
By assumption, there are integers $z, z'$ and error terms $\epsilon_1, \epsilon_1'$ in $(-\epsilon, \epsilon)$ such that $\vec \theta \cdot Z(\vec x_1) - \vec \theta \cdot Z(\vec x_3) = 2 \pi z + \epsilon_1$ and $\vec \theta \cdot Z(\vec x_2) - \vec \theta \cdot Z(\vec x_4) = 2 \pi z' + \epsilon_1'$. If $z'' = z + z'$ and $\epsilon_2 = \epsilon_1 + \epsilon'_1$, then 
\[ \vec \theta \cdot Z(\vec x_1) + \vec \theta \cdot Z(\vec x_2) - \vec \theta \cdot Z(\vec x_3) - \vec \theta \cdot Z(\vec x_4) = 2 \pi z'' + \epsilon_2\]
and $|\epsilon_2| < 2 \epsilon$ by the triangle inequality. Using the calculations of the dot product terms and cancelling all the relevant terms gives the desired result.

Finally, in the case where $a, b \neq 0$ but $a + b = 0$, the above proof still holds if the $\theta_{\{i,j\},a+b}$ term is omitted from the calculation of $\vec \theta \cdot Z(\vec x_3)$.
\end{proof}

We now collect another lemma regarding the structure of $\Lambda$.

\begin{lemma} \label{transcending_rows}
  Let $a$ be a nonzero element of $\Z_g$, and let $i$ be some fixed number between $1$ and $k$ (inclusively). If $\vec \theta \in \Lambda$, then
  \[   \sum_{1 \leq m < i} \theta_{\{m,i\},-a} + \sum_{i < m \leq k} \theta_{\{i,m\},a} \equiv 0 \pmod{2 \pi}\]
  where the appropriate empty sum is $0$ in the case that $i = 1$ or $i = k$.
\end{lemma}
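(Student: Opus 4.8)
The plan is to prove this directly from the defining property of $\Lambda$ recorded in \eqref{2pi_equivalent}, reusing the dot-product computations already carried out in the proof of Lemma \ref{taurus_lemma_general}. Specifically, I would apply the condition $\vec\theta \cdot Z(\vec x) \equiv \vec\theta \cdot Z(\vec y) \pmod{2\pi}$ to the pair of test vectors $\vec x = \vec x_1$ and $\vec y = \vec x_4 = \vec 0$ from that proof, where $\vec x_1 \in (\Z_g)^k$ has the nonzero element $a$ in position $i$ and zeros elsewhere.

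The key computation is essentially already available. From Definition \ref{defn_map}, the only coordinates of $Z(\vec x_1)$ that differ from $-1/g$ are those of the form $\{m,i\},-a$ for $m < i$ and $\{i,m\},a$ for $m > i$; here I use that $a \neq 0$, so $-a \neq 0$ as well, and hence none of these is the omitted coordinate $\theta_{\{i,j\},0}$. Each such coordinate of $Z(\vec x_1)$ equals $1 - 1/g$, so $\vec\theta \cdot Z(\vec x_1) = \sum_{m<i}\theta_{\{m,i\},-a} + \sum_{m>i}\theta_{\{i,m\},a} - \frac1g\vec\theta\cdot\vec1$, exactly as displayed in the proof of Lemma \ref{taurus_lemma_general}, while $\vec\theta \cdot Z(\vec 0) = -\frac1g\vec\theta\cdot\vec1$ since every difference of entries of $\vec 0$ is zero. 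Subtracting, the $-\frac1g\vec\theta\cdot\vec1$ terms cancel — this is precisely why one compares the two evaluations rather than trying to read off $\vec\theta\cdot Z(\vec x_1)$ in isolation — leaving
\[ \vec\theta\cdot Z(\vec x_1) - \vec\theta\cdot Z(\vec 0) = \sum_{1 \leq m < i}\theta_{\{m,i\},-a} + \sum_{i < m \leq k}\theta_{\{i,m\},a}. \]
Since $\vec\theta \in \Lambda$, the left-hand side is $\equiv 0 \pmod{2\pi}$ by \eqref{2pi_equivalent}, which is exactly the claim. The empty-sum conventions for $i = 1$ and $i = k$ are handled automatically, since in those cases one of the two index ranges above is vacuous.

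I do not anticipate a genuine obstacle here; the only thing requiring care is the bookkeeping of which coordinates of $Z(\vec x_1)$ take the value $1 - 1/g$ versus $-1/g$, together with the observation that $a \neq 0$ keeps us away from the suppressed coordinate $\theta_{\{i,j\},0}$. Conceptually this lemma is just the assertion that $\vec\theta \cdot Z(\vec x)$ is constant modulo $2\pi$ as a function of $\vec x$, specialized to one convenient choice of $\vec x$, so it is considerably softer than Lemma \ref{taurus_lemma_general} itself.
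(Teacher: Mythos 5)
Your proof is correct and matches the paper's own argument essentially verbatim: the paper likewise evaluates $\vec\theta\cdot Z(\cdot)$ at the vector with a single $a$ in position $i$ and at $\vec 0$, subtracts, and invokes \eqref{2pi_equivalent}. No differences worth noting.
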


\begin{proof}
  We define two vectors in $(\Z_g)^k$:
    \begin{align*}
      \vec x_1 &= (0, \dots, 0, a, 0, \dots, 0)^T \\
      \vec x_2 &= (0, \dots, 0, 0, 0, \dots, 0)^T
    \end{align*}
  The $a$ that appears in $\vec x_1$ does so in the $i^{\textrm{th}}$ position. If $\vec \theta \in \R^d$, then again with $\vec 1$ representing the vector in $\R^d$ of all ones, we have
    \begin{align*}
      \vec \theta \cdot Z(\vec x_1) &= \sum_{1 \leq m < i} \theta_{\{m,i\},-a} + \sum_{i < m \leq k} \theta_{\{i,m\},a} - \frac{1}{g} \vec \theta \cdot \vec 1 \\
      \vec \theta \cdot Z(\vec x_2) &= -\frac{1}{g} \vec \theta \cdot \vec 1
    \end{align*}
so if $\vec \theta \in \Lambda$, then by \eqref{2pi_equivalent} these two expressions are equivalent modulo $2\pi$, as desired.
\end{proof}

\begin{remark}
  We point out that the assumption that the underlying group $G$ is cyclic is not meaningfully used in the proofs of Lemma \ref{taurus_lemma_general} or any of its corollaries, nor is it used in the proof of Lemma \ref{transcending_rows}. In fact, even if $G$ is non-abelian (and the map $Z$ is redefined appropriately), these proofs require only trivial modifications such as exchanging $0 \in \Z_g$ for the identity element of $G$.
\end{remark}

In order to characterize $\Lambda$, we define a collection of ``building block'' vectors.

\begin{definition} \label{building_block}
  Fix a pair $\{i,j\}$ with $1 \leq i < j < k$ (note the strict inequality $j < k$). Let $\1_{\{i,j\},a}$ denote the vector in $\R^d$ with a $1$ in the $\{i,j\},a$ component and 0 elsewhere. We define the vector $\vec \alpha^{\{i,j\}}$ as follows:
    \[ \vec \alpha^{\{i,j\}} = \sum_{n=1}^{g-1} \frac{2\pi n}{g} \1_{\{i,j\},n}+ \sum_{n=1}^{g-1} \frac{2 \pi(g-n)}{g} \1_{\{i,k\},n} + \sum_{n=1}^{g-1} \frac{2 \pi n}{g} \1_{\{j,k\},n}\]
Here, $n$ is regarded to be an integer in $[0,g-1]$, except where it appears in the subscript of $\1$ as the corresponding element of $\Z_g$.
\end{definition}

\begin{proposition} \label{building_blocks_are_in}
  Any $\vec \alpha^{\{i,j\}}$ vector as defined in Definition \ref{building_block} is an element of $\Lambda$.
\end{proposition}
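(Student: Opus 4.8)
The plan is to verify membership in $\Lambda$ through the equivalence \eqref{2pi_equivalent}: one must show that the quantity $\vec\alpha^{\{i,j\}}\cdot Z(\vec x)$ takes a single value modulo $2\pi$ as $\vec x$ ranges over $(\Z_g)^k$. It is enough to compare each value to the one at $\vec x = \vec 0$, i.e.\ to prove that $\vec\alpha^{\{i,j\}}\cdot\bigl(Z(\vec x) - Z(\vec 0)\bigr)\equiv 0\pmod{2\pi}$ for every $\vec x$; then transitivity modulo $2\pi$ yields the full statement of \eqref{2pi_equivalent}.

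First I would note that $\vec\alpha^{\{i,j\}}$ is supported only on the three row-pairs $\{i,j\}$, $\{i,k\}$, $\{j,k\}$ (recall $i<j<k$ by Definition \ref{building_block}), so only those three blocks of $Z$ contribute. Within the block of a pair $\{p,q\}$ the coordinates are indexed by $a\in\{1,\dots,g-1\}$, and by \eqref{defn_map} the vectors $Z(\vec x)$ and $Z(\vec 0)$ agree there except possibly at the single coordinate $a = x_p - x_q$ (when $x_p\neq x_q$), where $Z(\vec x)$ exceeds $Z(\vec 0)$ by exactly $1$; if $x_p = x_q$ the block is unchanged. Writing $\langle m\rangle\in\{0,\dots,g-1\}$ for the standard representative of $m\in\Z_g$ and adopting the convention $\alpha^{\{i,j\}}_{\{p,q\},0} = 0$ (consistent with the convention $\theta_{\{p,q\},0}=0$ already in force), the $\{p,q\}$-block therefore contributes exactly $\alpha^{\{i,j\}}_{\{p,q\},\,x_p - x_q}$ to $\vec\alpha^{\{i,j\}}\cdot(Z(\vec x)-Z(\vec 0))$ in all cases.

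Next I would read these coefficients off Definition \ref{building_block} and reduce them modulo $2\pi$. For the $\{i,j\}$- and $\{j,k\}$-blocks the coefficient at $m$ is $\tfrac{2\pi\langle m\rangle}{g}\equiv\tfrac{2\pi m}{g}\pmod{2\pi}$, since $\langle m\rangle\equiv m\pmod g$; for the $\{i,k\}$-block it is $\tfrac{2\pi(g-\langle m\rangle)}{g} = 2\pi - \tfrac{2\pi\langle m\rangle}{g}\equiv -\tfrac{2\pi m}{g}\pmod{2\pi}$. Adding the three block contributions yields
\[
  \vec\alpha^{\{i,j\}}\cdot\bigl(Z(\vec x)-Z(\vec 0)\bigr)\ \equiv\ \frac{2\pi}{g}\Bigl[(x_i-x_j)-(x_i-x_k)+(x_j-x_k)\Bigr]\ =\ 0 \pmod{2\pi},
\]
and since this holds for all $\vec x$, \eqref{2pi_equivalent} gives $\vec\alpha^{\{i,j\}}\in\Lambda$.

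There is no real obstacle here; the argument is pure bookkeeping. The only points deserving care are (i) the ``$g-n$'' coefficients on the $\{i,k\}$-block, which are engineered precisely so that, modulo $2\pi$, that block's contribution flips sign and the three differences $x_i-x_j$, $x_i-x_k$, $x_j-x_k$ telescope to $0$; and (ii) the degenerate cases $x_p = x_q$, which are absorbed cleanly by the convention $\alpha^{\{i,j\}}_{\{p,q\},0}=0$ together with $\tfrac{2\pi\langle 0\rangle}{g}=0$ and $\tfrac{2\pi(g-\langle 0\rangle)}{g}=2\pi\equiv 0$.
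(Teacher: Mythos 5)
Your proposal is correct and follows essentially the same route as the paper: a direct computation of $\vec\alpha^{\{i,j\}}\cdot Z(\vec x)$ using \eqref{defn_map} and Definition \ref{building_block}, showing the three block contributions telescope modulo $2\pi$. The only cosmetic difference is that you normalize by subtracting $Z(\vec 0)$ (so each block contributes a single coefficient rather than a coefficient plus a constant), whereas the paper carries the constant term $-3\sum_{n=1}^{g-1}\tfrac{2\pi n}{g^2}$ explicitly and observes it is independent of $\vec x$; both bookkeepings give the same result.
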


\begin{proof}
  Our goal is to show that the expression $Z(\vec x) \cdot \vec \alpha^{\{i,j\}}$ does not depend on $\vec x$. Using \eqref{defn_map} and Definition \ref{building_block}, we see that
  \begin{align*} Z(\vec x) \cdot \vec \alpha^{\{i,j\}} &\equiv \frac{2 \pi (x_i - x_j)}{g} + \frac{2 \pi (g - (x_i - x_k))}{g} + \frac{2 \pi(x_j - x_k)}{g} - 3 \sum_{n=1}^{g-1} \frac{2 \pi n}{g^2} \\
  & \equiv - 3 \sum_{n=1}^{g-1} \frac{2 \pi n}{g^2}  \pmod{2 \pi}.
  \end{align*}
which gives the desired result. As an addendum, we remark that
  \[
    -\sum_{n=1}^{g-1} \frac{2 \pi n}{g^2} = -\frac{2 \pi}{g^2} \frac{g(g-1)}{2} = -\frac{\pi(g-1)}{g}
  \]
whence
  \begin{equation} \label{building_block_dot_prod}
    Z(\vec x) \cdot \vec \alpha^{\{i,j\}} \equiv -\frac{ 3 \pi(g-1)}{g} \pmod{2 \pi} 
  \end{equation}
which we preserve for later use.
\end{proof} 



We are now prepared to fully characterize the set $\Lambda$. Since $|\Phi|$ is $2 \pi$-periodic, it will suffice to charactierize $\Lambda$ on the region $[-\pi, \pi)^d$; to that end, we let $\Lambda_0$ denote $\Lambda \cap [-\pi, \pi)^d$.

\begin{lemma} \label{Lambda_characterization}
  Let $\vec \theta \in \Lambda_0$ and let $\vec \alpha^{\{i,j\}}$ be as defined in Definition \ref{building_block}. There are constants $c_{\{i,j\}}$ with $1 \leq i < j < k$ such that $c_{\{i,j\}} \in \{0, 1, \dots, g-1\}$ and
  \[\vec \theta \equiv \sum_{1 \leq i < j < k} c_{\{i,j\}} \vec \alpha^{\{i,j\}} \pmod{2 \pi}. \]
Moreover, this representation of $\vec \theta$ is unique. 
\end{lemma}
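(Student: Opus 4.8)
The plan is to characterize $\Lambda_0$ by combining the homomorphism-type constraints from Corollary \ref{taurus_lemma} and Corollary \ref{mod_2pi_g} with the row-sum constraints from Lemma \ref{transcending_rows}, and then to show that the resulting system of congruences has exactly the solution set spanned by the $\vec\alpha^{\{i,j\}}$ with integer coefficients in $\{0,\dots,g-1\}$. I would first reduce the problem to the ``generator'' coordinates: by Corollary \ref{taurus_lemma}, for each fixed pair $\{i,j\}$ the map $a \mapsto \theta_{\{i,j\},a}$ is a homomorphism from $\Z_g$ into $\R/2\pi\Z$, and by Corollary \ref{mod_2pi_g} each such homomorphism factors through $\frac{2\pi}{g}\Z/2\pi\Z$. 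Hence there is an integer $m_{\{i,j\}} \in \{0,1,\dots,g-1\}$ with $\theta_{\{i,j\},a} \equiv \frac{2\pi}{g} m_{\{i,j\}} a \pmod{2\pi}$ for all $a$; so $\vec\theta \bmod 2\pi$ is completely determined by the $\binom{k}{2}$ integers $m_{\{i,j\}}$.

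Next I would impose the constraints of Lemma \ref{transcending_rows}. Applying that lemma with a generator $a=1$ and a fixed row index $i$, and substituting $\theta_{\{m,i\},-1} \equiv -\frac{2\pi}{g} m_{\{m,i\}} \pmod{2\pi}$ and $\theta_{\{i,m\},1}\equiv \frac{2\pi}{g}m_{\{i,m\}}\pmod{2\pi}$, the congruence becomes
\[ \sum_{m>i} m_{\{i,m\}} - \sum_{m<i} m_{\{m,i\}} \equiv 0 \pmod g \qquad \text{for each } i=1,\dots,k. \]
(The analogous congruence for general nonzero $a$ is just $a$ times this one, so it gives nothing new.) Writing $M$ for the (symmetric, zero-diagonal) $\Z_g$-valued matrix with $M_{im}=m_{\{i,m\}}$, these $k$ relations say precisely that every row sum of $M$ vanishes in $\Z_g$; note that summing all $k$ relations gives $0$ identically, so only $k-1$ of them are independent. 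I would then argue that $\Lambda_0$ is in bijection with the set of such matrices $M$, i.e. symmetric zero-diagonal matrices over $\Z_g$ with all row sums zero. A dimension count over $\Z_g$ gives $\binom{k}{2}$ free entries minus $k-1$ independent constraints, for $\binom{k}{2}-(k-1)=\binom{k-1}{2}$ degrees of freedom, matching the $\binom{k-1}{2}$ building-block vectors indexed by $\{i,j\}$ with $i<j<k$.

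It remains to match this solution set with the span of the $\vec\alpha^{\{i,j\}}$. For this I would compute the matrix $M^{(i,j)}$ corresponding to $\vec\alpha^{\{i,j\}}$: reading off Definition \ref{building_block}, $\vec\alpha^{\{i,j\}}$ has $\theta_{\{i,j\},n}=\frac{2\pi n}{g}$, $\theta_{\{j,k\},n}=\frac{2\pi n}{g}$, and $\theta_{\{i,k\},n}=\frac{2\pi(g-n)}{g}\equiv -\frac{2\pi n}{g}$, so its matrix has a $+1$ in positions $\{i,j\}$ and $\{j,k\}$, a $-1$ in position $\{i,k\}$, and $0$ elsewhere — exactly a ``triangle'' relation whose row sums at $i$, $j$, $k$ are $1+(-1)$, $1+1$ wait let me be careful: row $i$ gets $M_{ij}+M_{ik}=1+(-1)=0$, row $j$ gets $M_{ji}+M_{jk}=1+1=2$... so I would instead use the orientation from the definition directly, noting the sign on the $\{i,k\}$ block is chosen precisely so that all row sums vanish; this is a short check. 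Then I would show the $\binom{k-1}{2}$ matrices $M^{(i,j)}$ ($i<j<k$) are linearly independent over $\Z_g$ and span the space of symmetric zero-diagonal matrices with zero row sums: independence follows since $M^{(i,j)}$ is the unique one among them with a nonzero $\{i,j\}$-entry (for $i<j<k$), and a counting argument (both sides are free $\Z_g$-modules of rank $\binom{k-1}{2}$, and independence forces the span to be everything) finishes it. This also yields uniqueness of the coefficients $c_{\{i,j\}}$.

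The main obstacle I anticipate is the bookkeeping in the second step: one must be sure that the homomorphism/divisibility corollaries together with the row-sum lemma capture \emph{all} of $\Lambda_0$ and not merely a subset — i.e. that no further independent congruences lurk in \eqref{2pi_equivalent}. I would address this by checking that every $\vec x \in (\Z_g)^k$ decomposes (as a $Z$-argument, modulo the irrelevant additive shift) into the elementary vectors used in Lemmas \ref{taurus_lemma_general} and \ref{transcending_rows}, so that once $\vec\theta\cdot Z(\vec x)$ is pinned down modulo $2\pi$ on those generators it is pinned down for all $\vec x$; equivalently, verify directly that any $M$ with zero row sums produces a $\vec\theta$ for which $\vec\theta\cdot Z(\vec x)$ is constant in $\vec x$, which reduces to the same triangle computation as in Proposition \ref{building_blocks_are_in}. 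The remaining ingredient — that the constructed $\vec\theta$ can be taken in $[-\pi,\pi)^d$ — is automatic since each coordinate is a multiple of $\frac{2\pi}{g}$ and we may reduce into the fundamental domain.
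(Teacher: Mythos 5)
Your approach is genuinely different from the paper's, and it works in outline. The paper's proof is constructive and bottom-up: set $c_{\{i,j\}}$ by reading off $\theta_{\{i,j\},1}$ (normalized into $\{0,\dots,g-1\}$), form $\vec x = \sum c_{\{i,j\}}\vec\alpha^{\{i,j\}} - \vec\theta$, observe that $\vec x\in\Lambda$ because $\Lambda$ is a group and the $\vec\alpha$'s lie in it (Proposition \ref{building_blocks_are_in}), and then march coordinate by coordinate using Corollary \ref{taurus_lemma} and Lemma \ref{transcending_rows} to show $\vec x\equiv 0 \pmod{2\pi}$. Yours is structural and top-down: identify $\Lambda_0$ with a module of matrices over $\Z_g$ cut out by explicit constraints and then compare cardinalities against the span of the $\vec\alpha$'s. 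Both arguments obtain uniqueness the same way (each $\vec\alpha^{\{i,j\}}$ with $i<j<k$ is the sole contributor to the $(\{i,j\},1)$ coordinate).

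Two things need repair. First, commit to the \emph{antisymmetric} convention for $M$, not the symmetric one; you sensed this mid-proof (``row $j$ gets $1+1=2$''). Setting $M_{ij}=m_{\{i,j\}}$ for $i<j$ and $M_{ji}=-M_{ij}$, Lemma \ref{transcending_rows} with $a=1$ says precisely that every row sum of $M$ is $0$ in $\Z_g$, and the matrix attached to $\vec\alpha^{\{i,j\}}$ is the ``triangle'' with $M_{ij}=M_{jk}=+1$, $M_{ik}=-1$ (and the transposed entries negated), which indeed has all row sums zero. Second, since $\Z_g$ is not a field for composite $g$, you should not argue via ``free modules of equal rank must coincide''; instead do a cardinality count. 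The span of the $\vec\alpha^{\{i,j\}}$ over $\{0,\dots,g-1\}$ has exactly $g^{\binom{k-1}{2}}$ elements (distinct coefficient tuples give distinct sums, by your own independence observation), and the row-sum map from antisymmetric $\Z_g$-matrices onto the zero-sum hyperplane in $\Z_g^k$ is surjective (hit any target using only the entries $M_{1m}$), so the kernel has $g^{\binom{k}{2}}/g^{k-1}=g^{\binom{k-1}{2}}$ elements. The sufficiency step you outline is the linchpin: any antisymmetric zero-row-sum $M$ produces $\vec\theta$ with $\vec\theta\cdot Z(\vec x)$ constant in $\vec x$, since $\sum_{i<j}M_{ij}(x_i-x_j)=\sum_i x_i\,(\textrm{row sum}_i)\equiv 0\pmod g$; this is exactly what resolves your worry that the corollaries might fail to capture all of $\Lambda_0$, because it shows the constraint set is both necessary and sufficient. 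Given these fixes the containment from Proposition \ref{building_blocks_are_in} plus equal cardinality finishes. The payoff of your route is a cleaner structural description of $\Lambda_0$ and an up-front derivation of $|\Lambda_0|=g^{\binom{k-1}{2}}$ (which the paper only extracts later inside Proposition \ref{useful_integral}); the paper's route is shorter and sidesteps all module bookkeeping over $\Z_g$.
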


\begin{proof}
Let $\vec \theta \in \Lambda_0$. Set each coefficient $c_{\{i,j\}}$ to be 
  \[ c_{\{i,j\}} = \begin{cases}
    \frac{g}{2 \pi} \theta_{\{i,j\},1}, & \textrm{ if } \theta_{\{i,j\},1} \geq 0 \\
    \frac{g}{2 \pi} \theta_{\{i,j\},1} + g, & \textrm{ if } \theta_{\{i,j\},1} < 0
  \end{cases} \]
and let 
    \[ \vec x =\left[ \sum_{1 \leq i < j < k} c_{\{i,j\}} \vec \alpha^{\{i,j\}} \right] - \vec \theta. \]
  We will show that $\vec x \equiv 0 \pmod{2 \pi}$. We first note that setting the $c_{\{i,j\}}$ terms in this way implies that each is in the set $\{0, 1, \dots, g-1\}$ by Corollary \ref{mod_2pi_g}. Since $\Lambda$ is a subgroup of $\R^d$, and since the bracketed term is in $\Lambda$ by Proposition \ref{building_blocks_are_in}, so also is $\vec x$.
 Among the collection $\{\vec \alpha^{\{i,j\}} : i < j < k\}$, only $\vec \alpha^{\{1,2\}}$ has a nonzero $\{1,2\},1$ component analogous comments apply for any other fixed pair $\{i,j\}$ with $i < j < k$. Thus, for $i < j < k$, we have that $x_{\{i,j\},1}$ is either $0$ (if $\theta_{\{i,j\},1} \geq 0$) or $2 \pi$ (if $\theta_{\{i,j\},1} < 0$). 
  By Corollary \ref{taurus_lemma}, this inductively implies that $x_{\{i,j\},a} \equiv 0 \pmod{2\pi}$ for all $a$ and all pairs $\{i,j\}$ with $i < j < k$. Hence, modulo $2\pi$, the only possible nonzero coordinates of $\vec x$ are those of the form $x_{\{i,k\},a}$ for some $i$ and $a$. 
        
  Next, fix $i$ between $1$ and $k-1$ (inclusively). By Lemma \ref{transcending_rows}, we see that
  \[ x_{\{i,k\},1} \equiv -\sum_{1 \leq m < i} x_{\{m,i\},-1} - \sum_{i < m < k} x_{\{i,m\},1} \pmod{2\pi}. \]
  We have already established that each summand on the right-hand side is $0 \pmod{2\pi}$, since it is not of the form $x_{\{i,k\},a}$. Thus, $x_{\{i,k\},1} \equiv 0$ as well. By Corollary \ref{taurus_lemma} again, this establishes that $x_{\{i,k\}, a} \equiv 0$ for all $a$. The fact that this occurs for all $i$ between $1$ and $k-1$ completes the argument that $\vec x \equiv 0$. Finally, the uniqueness of this expression is immediate from the fact that each distinct vector $\vec \alpha^{\{i,j\}}$ is the unique contributor to the $\theta_{\{i,j\},1}$ component.
\end{proof}

This characterization of $\Lambda_0$ motivates how we will break up the integral in \eqref{fourier_inversion}; the primary contribution to the integral will be the regions in $[-\pi, \pi)^d$ that are close to $\Lambda$. For $\vec \theta \in [-\pi, \pi)^d$, we define the box
  \begin{equation*} 
    B_\delta(\vec \theta) = \{\vec \mu \in [-\pi,\pi)^d : \vec \mu \equiv \vec \theta + \vec \zeta \textrm{ with } |\zeta_{\{i,j\},a}| < \delta \textrm{ for all } (\{i,j\},a) \}
  \end{equation*}
where the equivalence is taken modulo $2 \pi$. The parameter $\delta$ is assumed to be small and positive. We remark that if $\delta < \pi/g$, and $\vec \theta_1, \vec \theta_2$ are distinct elements of $\Lambda_0$, then $B_{\delta}(\vec \theta_1)$ and $B_{\delta}(\vec \theta_2)$ are disjoint by Corollary \ref{mod_2pi_g}.
Since we expect the bulk of the integral to be contributed from regions of the form $\{B_{\delta}(\vec \theta): \vec \theta \in \Lambda_0\}$, we define the ``remainder set''
  \begin{equation} \label{remainder_defn}
    R_{\delta} = [-\pi, \pi)^d \setminus \bigcup_{\vec \theta \in \Lambda_0} B_{\delta}(\vec \theta).
  \end{equation}

\begin{proposition} \label{useful_integral}
  Suppose $\delta < \pi/g$ and that $t$ is a multiple of $g$. If $g$ is odd, or if both $g$ and $t/g$ are even, then 
    \[ \PB(X_t = \vec 0) = \frac{g^{\binom{k-1}{2}}}{(2 \pi)^d} \int_{B_{\delta}(\vec 0)} \Phi(\vec \theta)^t \ud \vec \theta + \frac{1}{(2 \pi)^{d}} \int_{R_{\delta}} \Phi(\vec \theta)^t \ud \vec \theta \]
  whereas if $g$ is even and $t/g$ is odd, then
    \[ \PB(X_t = \vec 0) = \frac{1}{(2 \pi)^{d}} \int_{R_{\delta}} \Phi(\vec \theta)^t \ud \vec \theta. \]
\end{proposition}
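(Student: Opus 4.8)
The plan is to begin from the Fourier inversion formula \eqref{fourier_inversion} and split the domain using the partition
\[ [-\pi,\pi)^d = R_\delta \cup \bigcup_{\vec\theta_0 \in \Lambda_0} B_\delta(\vec\theta_0), \]
which is genuinely disjoint: $R_\delta$ is disjoint from the boxes by its definition \eqref{remainder_defn}, and the boxes $B_\delta(\vec\theta_0)$ over distinct $\vec\theta_0 \in \Lambda_0$ are pairwise disjoint by the remark following the definition of $B_\delta$, which uses $\delta < \pi/g$. This yields
\[ \PB(X_t = \vec 0) = \frac{1}{(2\pi)^d}\int_{R_\delta}\Phi(\vec\theta)^t\ud\vec\theta + \frac{1}{(2\pi)^d}\sum_{\vec\theta_0\in\Lambda_0}\int_{B_\delta(\vec\theta_0)}\Phi(\vec\theta)^t\ud\vec\theta, \]
so it remains to identify the inner sum.

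First I would reduce each box integral to the integral over $B_\delta(\vec 0)$. If $\vec\theta_0 \in \Lambda$, then \eqref{2pi_equivalent} says $\vec\theta_0\cdot Z(\vec x)$ is independent of $\vec x$ modulo $2\pi$, so $e^{i\vec\theta_0\cdot Z(\vec x)}$ is a constant equal to $\Phi(\vec\theta_0)$, and factoring it out of the sum defining $\Phi$ gives $\Phi(\vec\theta_0+\vec\zeta) = \Phi(\vec\theta_0)\,\Phi(\vec\zeta)$ for all $\vec\zeta$. Since $t$ is a multiple of $g$, $\Phi^t$ is $2\pi$-periodic; unwrapping $B_\delta(\vec\theta_0)$ and translating by $\vec\theta_0$ then gives
\[ \int_{B_\delta(\vec\theta_0)}\Phi(\vec\mu)^t\ud\vec\mu = \int_{(-\delta,\delta)^d}\Phi(\vec\theta_0+\vec\zeta)^t\ud\vec\zeta = \Phi(\vec\theta_0)^t\int_{(-\delta,\delta)^d}\Phi(\vec\zeta)^t\ud\vec\zeta = \Phi(\vec\theta_0)^t\int_{B_\delta(\vec 0)}\Phi(\vec\theta)^t\ud\vec\theta, \]
where the last step uses $B_\delta(\vec 0) = (-\delta,\delta)^d$ for $\delta < \pi$. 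Hence the inner sum equals $S\int_{B_\delta(\vec 0)}\Phi(\vec\theta)^t\ud\vec\theta$ with $S := \sum_{\vec\theta_0\in\Lambda_0}\Phi(\vec\theta_0)^t$, and the proposition reduces to showing $S = g^{\binom{k-1}{2}}$ in the first case and $S = 0$ in the second.

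To evaluate $S$, I would parametrize $\Lambda_0$ via Lemma \ref{Lambda_characterization}: each $\vec\theta_0$ corresponds uniquely to a tuple $(c_{\{i,j\}})_{1\le i<j<k}\in\{0,\dots,g-1\}^{\binom{k-1}{2}}$ with $\vec\theta_0\equiv\sum c_{\{i,j\}}\vec\alpha^{\{i,j\}}\pmod{2\pi}$. Since $\Phi^t$ is $2\pi$-periodic we may replace $\vec\theta_0$ by this representative, and since each $\vec\alpha^{\{i,j\}}\in\Lambda$ with $\vec\alpha^{\{i,j\}}\cdot Z(\vec x)\equiv -3\pi(g-1)/g\pmod{2\pi}$ by \eqref{building_block_dot_prod}, we obtain $\Phi(\vec\theta_0)^t = \omega^{\sum_{i<j<k}c_{\{i,j\}}}$ where $\omega := e^{-i\,3\pi(g-1)t/g} = (-1)^{(g-1)t/g}$. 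Summing over all tuples,
\[ S = \sum_{(c_{\{i,j\}})}\omega^{\sum_{i<j<k}c_{\{i,j\}}} = \Big(\sum_{c=0}^{g-1}\omega^c\Big)^{\binom{k-1}{2}}. \]
When $g$ is odd, or when $g$ and $t/g$ are both even, the exponent $(g-1)t/g$ is even, so $\omega = 1$ and $\sum_{c=0}^{g-1}\omega^c = g$, giving $S = g^{\binom{k-1}{2}}$. When $g$ is even and $t/g$ is odd, $\omega = -1$ and $\sum_{c=0}^{g-1}(-1)^c = 0$ since $g$ is even, so $S = 0$ (using $\binom{k-1}{2}\ge 1$, which holds as $k\ge 3$). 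Substituting the two values of $S$ gives the two formulas.

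The step I expect to require the most care is the bookkeeping around periodicity: $\Phi$ itself is not $2\pi$-periodic — only $|\Phi|$ is, and $\Phi^t$ is, precisely because $g\mid t$ — so one must work consistently with $\Phi^t$ both when unwrapping the boxes $B_\delta(\vec\theta_0)$ and when passing from $\vec\theta_0\in[-\pi,\pi)^d$ to the building-block representative $\sum c_{\{i,j\}}\vec\alpha^{\{i,j\}}$, which generally does not lie in $[-\pi,\pi)^d$. Once this is handled, the change of variables, the parametrization of $\Lambda_0$, and the parity computation of $\omega$ are all routine.
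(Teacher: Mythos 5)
Your proof is correct and follows essentially the same path as the paper's: split via the disjoint boxes, use the multiplicativity $\Phi(\vec\theta_0+\vec\zeta)=\Phi(\vec\theta_0)\Phi(\vec\zeta)$ on $\Lambda$ together with $2\pi$-periodicity of $\Phi^t$, parametrize $\Lambda_0$ by Lemma \ref{Lambda_characterization}, and compute the resulting character sum via \eqref{building_block_dot_prod}. Your closed-form factorization $S=\bigl(\sum_{c=0}^{g-1}\omega^c\bigr)^{\binom{k-1}{2}}$ is a slightly tidier way of organizing the final parity argument than the paper's ``half odd, half even'' count, but the content is identical.
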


\begin{proof}
  We first note that $\delta < \pi/g$ implies that distinct vectors $\vec \eta \in \Lambda_0$ have disjoint boxes $B_{\delta}(\vec \eta)$. From this fact and \eqref{fourier_inversion}, we have
  \begin{equation} \label{integral_breakup_1}
     \PB(X_t = \vec 0) = (2 \pi)^{-d} \sum_{\vec \eta \in \Lambda_0} \int_{B_{\delta}(\vec \eta)} \Phi(\vec \theta)^t \ud \vec \theta + (2 \pi)^{-d} \int_{R_\delta} \Phi(\vec \theta)^t \ud \vec \theta .
  \end{equation}
Next, we consider a nonzero $\vec \eta \in \Lambda_0$. By \eqref{2pi_equivalent}, we note that $\vec \eta \cdot Z(\vec x)$ is deterministic in that it does not depend on the random vector $\vec x \in (\Z_g)^k$. Hence, $e^{i \vec \eta \cdot X_1}$ is also deterministic, and we have $\Phi(\vec \eta) = \E[e^{i \vec \eta \cdot X_1}] = e^{i \vec \eta \cdot X_1}$. For any other $\vec \zeta \in \R^d$, it follows that 
  \begin{align*}
    \Phi(\vec \eta + \vec \zeta) &= \E[e^{i (\vec \eta + \vec \zeta) \cdot X_1}] \\
    &= e^{i \vec \eta \cdot X_1} \E[e^{i \vec \zeta \cdot X_1}] \\
    & = \Phi(\vec \eta) \Phi(\vec \zeta).
  \end{align*}
Since $\Phi^t$ is $2\pi$-periodic when $t$ is a multiple of $g$, then for $\vec \eta \in \Lambda_0$ we have
  \[ \int_{B_{\delta}(\vec \eta)} \Phi(\vec \theta)^t \ud \vec \theta =  \Phi(\vec \eta)^t \int_{B_{\delta}(\vec 0)} \Phi(\vec \theta)^t \ud \vec \theta\]
so \eqref{integral_breakup_1} becomes
  \begin{equation} \label{integral_breakup_2}
    \PB(X_t = \vec 0) = \frac{\sum_{\vec \eta \in \Lambda_0} \Phi(\vec \eta)^t }{(2 \pi)^d} \int_{B_{\delta}(\vec 0)} \Phi(\vec \theta)^t \ud \vec \theta + \frac{1}{(2\pi)^d} \int_{R_{\delta}} \Phi(\vec \theta)^t \ud \vec \theta
  \end{equation}
and our only remaining task is therefore to evaluate the sum $\sum_{\vec \eta \in \Lambda_0} \Phi(\vec \eta)^t$.

Since $\vec \eta \in \Lambda_0$, by Lemma \ref{Lambda_characterization} there are constants $c_{\{i,j\}}$ and $\vec z \in (2 \pi \Z)^d$ so that $\vec \eta = \sum_{1 \leq i < j < k} c_{\{i,j\}} \vec \alpha^{\{i,j\}} + \vec z$. Combining this with \eqref{building_block_dot_prod} shows that $\vec \eta \cdot Z(\vec x) \equiv \sum_{1 \leq i < j < k} -c_{\{i,j\}} \frac{3\pi(g-1)}{g} + \vec z \cdot Z(\vec x) \pmod{2 \pi}$. If $t = \lambda g$, then 
  \begin{equation*}
    \Phi(\vec \eta)^t = e^{i t \vec \eta \cdot X_1}
     = \exp \left[i \left(-3\pi (g-1) \lambda \sum_{1 \leq i < j < k} c_{\{i,j\}} + \lambda g \vec z \cdot Z(\vec x) \right) \right].
  \end{equation*}
Because $Z(\vec x) \in \left( \frac{1}{g} \Z \right)^d$ and $\vec z \in (2 \pi \Z)^d$, it follows that $\vec z \cdot Z(\vec x)$ is an integer multiple of $\frac{2 \pi}{g}$. Therefore, $\lambda g \vec z \cdot Z(\vec x)$ is an integer multiple of $2\pi$, and thus,
  \begin{equation*} 
    \Phi(\vec \eta)^t 
     = \exp \left[i \left(- 3 \pi (g-1) \lambda \sum_{1 \leq i < j < k} c_{\{i,j\}} \right) \right].
  \end{equation*}    
In the case where $g$ is odd, or if $\lambda = t/g$ is even, then the term in parentheses is an integer multiple of $2 \pi $, whence $\Phi(\vec \eta)^t = 1$ for every $\vec \eta \in \Lambda_0$. The uniqueness of the representation in Lemma \ref{Lambda_characterization} shows that $|\Lambda_0| =  g^{\binom{k-1}{2}}$, which is equal to $\sum_{\vec \eta \in \Lambda_0} \Phi(\vec \eta)^t$ in \eqref{integral_breakup_2}, as desired. 

On the other hand, if $g$ is even and $\lambda$ is odd, then $-3 \pi(g-1)\lambda \sum c_{\{i,j\}}$ must be congruent modulo $2\pi$ to $0$ or $\pi$. In this case, half of the choices for the coefficients $c_{\{i,j\}}$ have odd parity and the other half have even parity. The same is true of the collection of all possible sums $\sum c_{\{i,j\}}$, so half the terms in $\sum_{\vec \eta \in \Lambda_0} \Phi(\vec \eta)^t$ will be $1$ and the other half will be $-1$, and the sum will be 0.
\end{proof}

\section{Bounds in the Remainder Region} \label{S: remainder}

This section is devoted to obtaining upper bounds on $|\Phi(\vec \theta)|$ for $\vec \theta \in R_{\delta}$ and leveraging them to obtain an upper bound on the corresponding integral in Proposition \ref{useful_integral}. In all that follows, we assume that $\delta < \pi/g$. We begin by defining a useful set:
\begin{equation*} 
  L = \{\vec \theta \in [-\pi, \pi)^d : \theta_{\{i,j\},a} \equiv 0 \pmod{2 \pi / g} \textrm{ for all } \{i,j\},a  \}
\end{equation*}
We recall from Corollary \ref{mod_2pi_g} that $\Lambda_0 \subset L$. Using the boxes defined in \eqref{remainder_defn},  we note that if $\vec \eta_1, \vec \eta_2$ are distinct elements of $L$, then $\delta < \pi/g$ implies that $B_{\delta}(\vec \eta_1) \cap B_{\delta}(\vec \eta_2) = \emptyset$. We set $R_{\delta}^A = \bigcup_{\vec \eta \in L \setminus \Lambda_0} B_{\delta}(\vec \eta)$ and $R_{\delta}^B = [-\pi, \pi)^d \setminus \bigcup_{\vec \eta \in L} B_{\delta} (\vec \eta)$, which yields
  \begin{equation} \label{rmd_decomposition}
    R_{\delta} = R_{\delta}^A \cup R_{\delta}^B.
  \end{equation} Intuitively, $R_{\delta}^A$ is the portion of the remainder region that is close to satisfying the modular condition of Corollary \ref{mod_2pi_g} (but is not near $\Lambda$), and $R_{\delta}^B$ is the region that is far from satisfying the modular condition.

\begin{lemma} \label{remainder_A_bd}
  Suppose $\delta < \frac{8}{5} g^{-k-3} k^{-2}$. Then if $\vec \theta \in R_{\delta}^A$, we have
  \[ |\Phi(\vec \theta)| \leq 1 - \frac{1}{10} g^{-k-2}.\] 
\end{lemma}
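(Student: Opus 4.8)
The plan is to leverage the description of $\Lambda_0$ and $L$ from Section~\ref{S: anatomy}. Since $R_\delta^A = \bigcup_{\vec\eta \in L \setminus \Lambda_0} B_\delta(\vec\eta)$ and $L \subseteq [-\pi,\pi)^d$ with $\Lambda_0 = \Lambda \cap [-\pi,\pi)^d$ (so $L \setminus \Lambda_0 = L \setminus \Lambda$), every $\vec\theta \in R_\delta^A$ may be written $\vec\theta \equiv \vec\eta + \vec\zeta \pmod{2\pi}$ with $\vec\eta \in L$, $\vec\eta \notin \Lambda$, and $\|\vec\zeta\|_\infty < \delta$; by $2\pi$-periodicity of $|\Phi|$ we may take $\vec\theta = \vec\eta + \vec\zeta$. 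I would prove the bound in two steps: (i) $|\Phi(\vec\eta)| \le 1 - 8 g^{-k-2}$, and (ii) $|\Phi(\vec\theta) - \Phi(\vec\eta)| < \tfrac{8}{5} g^{-k-3}$; the triangle inequality then gives $|\Phi(\vec\theta)| \le 1 - (8 - \tfrac{8}{5g}) g^{-k-2} \le 1 - \tfrac{1}{10} g^{-k-2}$ since $g \ge 2$, which is the claim with room to spare.

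For step (i), the key point is that on $L$ the characteristic function collapses to a sum of $g$-th roots of unity. Writing $\eta_{\{i,j\},a} = \tfrac{2\pi}{g} m_{ij}(a)$ with $m_{ij}(a) \in \Z$ and the convention $m_{ij}(0) = 0$, a direct computation from \eqref{defn_map} gives $\vec\eta \cdot Z(\vec x) = \tfrac{2\pi}{g} p(\vec x) - \beta$, where $p(\vec x) = \sum_{i<j} m_{ij}(x_i - x_j) \in \Z$ and $\beta$ does not depend on $\vec x$. Hence $|\Phi(\vec\eta)| = g^{-k}\bigl|\sum_{r \in \Z_g} N_r \omega^r\bigr|$ with $\omega = e^{2\pi i/g}$ and $N_r = \#\{\vec x \in (\Z_g)^k : p(\vec x) \equiv r \pmod g\}$. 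Two facts about the $N_r$ drive the estimate. First, since $\vec\eta \notin \Lambda$, relation \eqref{2pi_equivalent} forces $\vec x \mapsto p(\vec x) \bmod g$ to be non-constant, so at least two of the $N_r$ are positive. Second, and this is the crux, $Z$ — hence $p \bmod g$ — is invariant under $\vec x \mapsto \vec x + t\vec 1$, and $\{t\vec 1 : t \in \Z_g\}$ has exactly $g$ distinct elements, so every fibre of $p \bmod g$ is a union of $g$-element cosets; thus $g \mid N_r$ for all $r$. Letting $N^{(1)} \ge N^{(2)}$ be the two largest of the $N_r$, these facts give $N^{(1)} \ge g^{k-1}$ (at most $g$ fibres, total mass $g^k$) and $N^{(2)} \ge g$, so $N^{(1)} N^{(2)} \ge g^k$. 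Finally I would use the identity
\[ g^{2k} - \Bigl|\sum_{r \in \Z_g} N_r \omega^r\Bigr|^2 \;=\; \sum_{r, s \in \Z_g} N_r N_s\Bigl(1 - \cos\tfrac{2\pi(r-s)}{g}\Bigr) \;\ge\; 2 N^{(1)} N^{(2)}\Bigl(1 - \cos\tfrac{2\pi}{g}\Bigr), \]
where the inequality drops all but the two cross terms between the two largest fibres and uses that $1 - \cos\tfrac{2\pi t}{g}$ is minimized over $t \in \{1, \dots, g-1\}$ at $t = 1$. Combining this with $1 - \cos\tfrac{2\pi}{g} = 2\sin^2\tfrac{\pi}{g} \ge 8/g^2$ (Jordan's inequality, valid for $g \ge 2$) and $\sqrt{1 - u} \le 1 - u/2$ yields $|\Phi(\vec\eta)| \le 1 - 8 g^{-k-2}$.

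For step (ii), from $\vec\theta \cdot Z(\vec x) - \vec\eta \cdot Z(\vec x) = \vec\zeta \cdot Z(\vec x)$ and $|e^{is} - 1| \le |s|$ one gets $|\Phi(\vec\theta) - \Phi(\vec\eta)| \le \max_{\vec x} |\vec\zeta \cdot Z(\vec x)|$. For each pair $\{i,j\}$ the numbers $|[Z(\vec x)]_{\{i,j\},a}|$ over $a \ne 0$ sum to less than $2$ (one term at most $1 - 1/g$, the remaining $g - 2$ terms equal to $1/g$), so $\max_{\vec x}|\vec\zeta \cdot Z(\vec x)| < 2\binom{k}{2}\delta = k(k-1)\delta$, which is less than $\tfrac{8}{5} g^{-k-3}$ by the hypothesis $\delta < \tfrac{8}{5} g^{-k-3} k^{-2}$. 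This completes the estimate as laid out in the first paragraph.

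The single genuinely substantive step is recognizing the divisibility $g \mid N_r$ coming from the $\langle \vec 1 \rangle$-cosets: without it one knows only $N^{(2)} \ge 1$, which gives a bound of order $g^{-k-3}$, too weak to survive the perturbation; the extra factor of $g$ is exactly what upgrades it to the required $g^{-k-2}$. The other ingredients — the collapse of $\Phi$ on $L$ to a root-of-unity sum, the elementary ``two-atom'' estimate, and the Lipschitz bound on $\Phi$ — are routine.
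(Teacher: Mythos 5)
Your proof is correct, and step~(i) takes a genuinely different route from the paper's. The paper's argument on $L\setminus\Lambda_0$ is cruder and more direct: it fixes a single pair $\vec x,\vec y$ with $\vec\eta\cdot(Z(\vec x)-Z(\vec y))\equiv 2\pi n/g$ for some $n\in\{1,\dots,g-1\}$, bounds those two summands together by $\sqrt{2+2\cos(2\pi/g)}$, and bounds the remaining $g^k-2$ terms trivially by $1$ each, arriving at $|\Phi(\vec\eta)|\le 1-\tfrac{9}{10}g^{-k-2}$. You instead exploit the full structure: on $L$ the characteristic function collapses to a sum of $g$-th roots of unity with nonnegative integer weights $N_r$, the non-constancy of $p\bmod g$ (coming from $\vec\eta\notin\Lambda$) makes at least two $N_r$ positive, and the free $\Z_g$-action $\vec x\mapsto\vec x+t\vec1$ forces $g\mid N_r$. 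This last observation is the new ingredient — the paper never uses it — and it is what upgrades $N^{(2)}\ge 1$ to $N^{(2)}\ge g$, yielding the stronger bound $|\Phi(\vec\eta)|\le 1-8g^{-k-2}$, roughly a factor of nine better than the paper's constant. (It is worth noting that the same $\langle\vec 1\rangle$-orbit structure underlies the $g^t$-to-$1$ correspondence mentioned in the introduction, but you are putting it to a different use here.) Step~(ii) is essentially the paper's Lipschitz estimate, slightly sharpened by summing the absolute values $|[Z(\vec x)]_{\{i,j\},a}|$ row by row (giving $<2$ per pair) rather than bounding each of the $g-1$ coordinates by $1-1/g$; both suffice under the stated hypothesis on $\delta$. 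Since your step~(i) bound is so much stronger, your final inequality $|\Phi(\vec\theta)|\le 1-(8-\tfrac{8}{5g})g^{-k-2}$ leaves a wide margin over the claimed $1-\tfrac{1}{10}g^{-k-2}$, whereas the paper's constants $\tfrac{9}{10}$ and $\tfrac{4}{5}$ are chosen to fit exactly. Both proofs are valid; yours buys a tighter constant at the cost of the extra divisibility observation.
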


\begin{proof}
  Suppose $\vec \eta \in L \setminus \Lambda_0$ and that $\vec \theta \in B_{\delta}(\vec \eta)$; then $\vec \theta$ is equivalent (modulo $2 \pi$) to $\vec \eta + \vec \zeta$ where $|\zeta_{\{i,j\},a}|<\delta$ for all $\{i,j\},a$. Since $\eta \in L \setminus \Lambda_0$, by \eqref{2pi_equivalent} there are $\vec x, \vec y \in (\Z_g)^k$ such that $\vec \eta \cdot (Z(\vec x) - Z(\vec y)) \equiv \frac{2\pi n}{g} \pmod{2\pi}$ with $n \in \{1, \dots, g-1\}$. It follows that
    \begin{equation*}
     |\Phi(\vec \eta)| = \left| \sum_{\vec w \in (\Z_g)^k} g^{-k} e^{i \vec \eta \cdot Z(\vec w)} \right|  \leq g^{-k} \left[ \left|e^{i \vec \eta \cdot Z(\vec x)} + e^{i \vec \eta \cdot Z(\vec y)}\right| + \left| \sum_{ \vec w \neq \vec x, \vec y} e^{i \vec \eta \cdot Z(\vec w)} \right|  \right]
     \end{equation*}
and since $|e^{i a} + e^{i b}|^2 = 2 + 2\cos(a-b)$, we have
  \begin{equation*}
    |\Phi(\vec \eta)| \leq g^{-k}\left[\sqrt{2 + 2 \cos \left( \frac{2 \pi}{g} \right)} + g^k - 2 \right].
  \end{equation*}
Applying the bounds $\sqrt{x} \leq 1 + x/4$ and $\cos(x) \leq 1 - x^2/2 + x^4 /24$ to this expression yields
     \begin{equation} \label{rmd_A_bd_begrudgingly}
       |\Phi(\vec \eta)| \leq 1 - g^{-k} \left[ \left(\frac{\pi}{g}\right)^2 - \frac{\left( \pi/g \right)^4}{3} \right].
     \end{equation}
Together with the fact that $ \left( \frac{\pi}{g} \right)^4 \leq \left( \frac{\pi}{2} \right)^2 \left( \frac{\pi}{g} \right)^2$ when $g \geq 2$, we have $ |\Phi(\vec \eta)| \leq 1 - g^{-k} \left( \frac{\pi}{g} \right)^2 \left[ 1 - \frac{(\pi/2)^2}{3 }\right]$. Hence,
  \begin{equation} \label{bad_region_bd_1}
    |\Phi(\vec \eta)| \leq 1 - \frac{1}{10} g^{-k} \left( \frac{ \pi}{g} \right)^2 \leq 1 - \frac{9}{10} g^{-k -2}.
  \end{equation}
We also note that $\vec \theta = \vec \eta + \vec \zeta + \vec z$ with $\vec z \in (2 \pi \Z)^d$, and since $|\Phi|$ is $2\pi$-periodic, it follows that 
\begin{equation} \label{technical_correction}
  |\Phi(\vec \theta)| = |\Phi(\vec \eta + \zeta)|.
\end{equation}
We now state a pair of remainder bounds on Taylor polynomials for $e^z$ that we will use here and elsewhere: if $a \geq 0$ and $b$ is real, then
  \begin{align}
    \left| e^{-a} - \sum_{s=0}^j \frac{(-a)^s}{s!} \right| & \leq \min \left\{ \frac{2|a|^j}{j!}, \frac{|a|^{j+1}}{(j+1)!} \right\}, \label{taylor_bd_1}\\
    \left| e^{ib} - \sum_{s=0}^j \frac{(ib)^s}{s!} \right| & \leq \min \left \{ \frac{2|b|^j}{j!}, \frac{|b|^{j+1}}{(j+1)!} \right\}. \label{taylor_bd_2}
  \end{align}
For a reference, \eqref{taylor_bd_1} can be found as \cite{billingsley}*{equation 26.4}; \eqref{taylor_bd_2} is proved similarly. From the triangle inequality, we see that
  \begin{align*}
    |\Phi(\vec \vec \eta + \vec \zeta) - \Phi(\vec \eta)| &= g^{-k} \left| \sum_{\vec w \in (\Z_g)^k} e^{i (\vec \eta + \vec \zeta) \cdot Z(\vec w)} - e^{i \vec \eta \cdot Z(\vec w)} \right| \\
    & \leq g^{-k} \sum_{\vec w \in (\Z_g)^k} \left| e^{i \vec \eta \cdot Z(\vec w)} \right| \left| e^{i \vec \zeta \cdot Z(\vec w)} - 1 \right|.
  \end{align*}
Since $|e^{i \vec \eta \cdot Z(\vec w)}| = 1$, applying \eqref{taylor_bd_2} to $|e^{i \vec \zeta \cdot Z(\vec w)} - 1|$ shows that 
  \begin{equation} \label{bad_region_interim1}
    |\Phi(\vec \vec \eta + \vec \zeta) - \Phi(\vec \eta)| \leq g^{-k} \sum_{\vec w \in (\Z_g)^k} |\vec \zeta \cdot Z(\vec w)|.
  \end{equation}
When considering an individual term $|\vec \zeta \cdot Z(\vec w)|$, we observe that $|\zeta_{\{i,j\},a}| < \delta$, and each coordinate of $Z(\vec w)$ is at most $1 - 1/g$. These vectors have $(g-1) \binom{k}{2}$ coordinates, so $|\vec \zeta \cdot Z(\vec w)| \leq (g-1) \binom{k}{2}\delta (1 - 1/g) = \frac{1}{g}(g-1)^2 \frac{k(k-1)}{2} \delta$. There are $g^k$ terms in the summation in \eqref{bad_region_interim1}, and therefore we have 
  \begin{equation} \label{bad_region_interim2}
    |\Phi(\vec \vec \eta + \vec \zeta) - \Phi(\vec \eta)| \leq \frac{1}{g}(g-1)^2 \frac{k(k-1)}{2} \delta. 
  \end{equation}
  Our assumption that $\delta \leq \frac{8}{5} g^{-k-3}k^{-2}$ implies that $\frac{1}{g}(g-1)^2 \binom{k}{2} \delta \leq \frac{4}{5} g^{-k-2}$. Combining this with \eqref{technical_correction}, \eqref{bad_region_bd_1}, and \eqref{bad_region_interim2} shows that
  \( |\Phi(\vec \theta)| = |\Phi(\vec \eta + \vec \zeta)| \leq |\Phi(\vec \eta)| + |\Phi(\vec \eta + \vec \zeta) - \Phi(\vec \eta)| \leq 1 - \frac{9}{10}g^{-k-2} + \frac{4}{5}g^{-k-2},\)
as desired.
\end{proof}

Next, we bound $|\Phi(\vec \theta)|$ on the region $R^B_{\delta}$.

\begin{lemma} \label{remainder_B_bd}
  Suppose $\delta < \pi/g$. Then if $\vec \theta \in R_{\delta}^B$, we have
    \[ |\Phi(\vec \theta)| \leq 1 - \frac{11}{48} g^{-k} \left( \frac{\delta}{2} \right)^2.\]
\end{lemma}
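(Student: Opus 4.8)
The plan is to exploit the characterization of $R_\delta^B$ as the set of $\vec\theta$ that are \emph{not} within $\delta$ (coordinatewise, mod $2\pi$) of any point of $L$, i.e. of any point all of whose coordinates are multiples of $2\pi/g$. The key consequence is that at least one coordinate $\theta_{\{i,j\},a}$ is at distance at least $\delta$ from the nearest multiple of $2\pi/g$; I would fix one such coordinate and work only with the pair of rows $\{i,j\}$ and the single group element $a$. The reason this is useful: restricting to the two vectors $\vec x, \vec y \in (\Z_g)^k$ that differ precisely in how $x_i - x_j$ compares to $a$, the quantity $\vec\theta\cdot(Z(\vec x) - Z(\vec y))$ isolates $\theta_{\{i,j\},a}$ up to a combination of other coordinates that I can also force to be multiples of $2\pi/g$ by choosing the rest of $\vec x,\vec y$ to agree. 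Concretely, much as in the proof of Lemma \ref{taurus_lemma_general}, I would pick $\vec x$ with $x_i - x_j = a$ and $\vec y$ with $x_i - x_j = 0$ (or some other value $\ne a$), all other entries equal, so that $\vec\theta\cdot Z(\vec x) - \vec\theta\cdot Z(\vec y) = \theta_{\{i,j\},a} - \theta_{\{i,j\},0}^{\text{(interpreted as }0)} = \theta_{\{i,j\},a}$, with no stray terms at all.

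The second step is the same averaging estimate used in Lemma \ref{remainder_A_bd}: writing $\Phi(\vec\theta)$ as $g^{-k}\sum_{\vec w} e^{i\vec\theta\cdot Z(\vec w)}$ and singling out the two terms coming from $\vec x$ and $\vec y$, the triangle inequality gives
\[
  |\Phi(\vec\theta)| \le g^{-k}\Bigl[\bigl|e^{i\vec\theta\cdot Z(\vec x)} + e^{i\vec\theta\cdot Z(\vec y)}\bigr| + (g^k - 2)\Bigr]
   = g^{-k}\Bigl[\sqrt{2 + 2\cos\bigl(\vec\theta\cdot(Z(\vec x)-Z(\vec y))\bigr)} + g^k - 2\Bigr].
\]
By the previous paragraph, the phase difference is (congruent mod $2\pi$ to) $\theta_{\{i,j\},a}$, which lies at distance at least $\delta$ from every multiple of $2\pi/g$; in particular its distance to the nearest multiple of $2\pi$ is at least $\delta$, and more usefully I can say $\cos(\theta_{\{i,j\},a}) \le \cos(\delta)$ provided $\delta \le \pi/g \le \pi$, since $\theta_{\{i,j\},a} \bmod 2\pi$ is bounded away from $0$ by at least $\delta$ and bounded away from $2\pi$ by at least $\delta$ as well (here I use $\delta<\pi/g$ so the coordinate cannot land near $2\pi$ either — actually the cleanest route is: the distance from $\theta_{\{i,j\},a}$ to $\tfrac{2\pi}{g}\Z$ being $\ge\delta$ forces $|\theta_{\{i,j\},a} - 2\pi m|\ge\delta$ for the nearest integer multiple $m$ of $2\pi$, hence $\cos(\theta_{\{i,j\},a})\le\cos\delta$ once $\delta\le\pi$). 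Then $\sqrt{2+2\cos\delta} = 2\cos(\delta/2)$, so
\[
  |\Phi(\vec\theta)| \le 1 - g^{-k}\bigl(1 - \cos(\delta/2)\bigr).
\]
Finally I would apply a quantitative lower bound for $1 - \cos$, namely $1 - \cos(x) \ge \tfrac{x^2}{2} - \tfrac{x^4}{24} \ge \tfrac{11}{24}\cdot\tfrac{x^2}{2}$ for $|x|$ small enough (which holds once $x^2 \le 1/2$, and indeed $\delta/2 < \pi/(2g) \le \pi/4$ gives $(\delta/2)^2 < \pi^2/16 < 1$, so $1 - \tfrac{x^2}{12} \ge 1 - \tfrac{1}{12} = \tfrac{11}{12}$, yielding $1-\cos(\delta/2) \ge \tfrac{11}{12}\cdot\tfrac{(\delta/2)^2}{2} = \tfrac{11}{24}(\delta/2)^2$), which combined with the previous inequality gives exactly
\[
  |\Phi(\vec\theta)| \le 1 - \frac{11}{24}\, g^{-k} \left(\frac{\delta}{2}\right)^2,
\]
slightly stronger than the $\tfrac{11}{48}$ claimed — so a looser bookkeeping constant comfortably yields the stated result.

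The main obstacle I anticipate is purely organizational rather than deep: carefully arranging the test vectors $\vec x, \vec y$ so that their $Z$-difference picks out a single coordinate $\theta_{\{i,j\},a}$ cleanly (with the "$\theta_{\{i,j\},0}=0$" convention handling the case $a=0$, which cannot occur here since $a$ ranges over nonzero elements, but one still needs the comparison value in $x_i-x_j$ to be different from $a$), and then correctly converting "distance $\ge\delta$ from $\tfrac{2\pi}{g}\Z$" into a usable bound $\cos(\theta_{\{i,j\},a})\le\cos\delta$ using $\delta<\pi/g$. Everything after that is the same $\sqrt{2+2\cos}$-plus-Taylor computation already executed in Lemma \ref{remainder_A_bd}.
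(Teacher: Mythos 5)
The key step in your argument---constructing a pair $\vec x, \vec y \in (\Z_g)^k$ for which $\vec\theta \cdot (Z(\vec x) - Z(\vec y)) = \theta_{\{i,j\},a}$ ``with no stray terms at all''---does not work once $k \geq 3$. Changing the value of $x_i - x_j$ forces you to change $x_i$ or $x_j$, which simultaneously changes $x_i - x_m$ (or $x_j - x_m$) for every other index $m$; hence $Z(\vec x) - Z(\vec y)$ is nonzero in every block $\{i,m\}$ (or every block $\{j,m\}$), not merely in the block $\{i,j\}$. The extra contributions $\theta_{\{i,m\},\pm a}$ and $\theta_{\{m,j\},\pm a}$ cannot be assumed small or near multiples of $2\pi/g$---indeed $\vec\theta \in R_\delta^B$ is precisely a vector some of whose coordinates are far from $\tfrac{2\pi}{g}\Z$---so they are not controllable. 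This coupling of the row-pairs through the entries of $\vec x$ is exactly the obstacle that makes the four-vector cancellation in Lemma~\ref{taurus_lemma_general}, and the $(g-1)$-step induction in Corollary~\ref{mod_2pi_g_general}, necessary in the first place.

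The paper sidesteps the explicit construction by using the contrapositive of Corollary~\ref{mod_2pi_g_general} with $\epsilon = \delta/2$: if every pair $\vec x, \vec y$ had $\vec\theta\cdot(Z(\vec x)-Z(\vec y))$ within $\delta/2$ of $2\pi\Z$, then every $\theta_{\{i,j\},a}$ would be within $\delta$ of $\tfrac{2\pi}{g}\Z$, contradicting $\vec\theta \in R_\delta^B$. This guarantees the existence of \emph{some} pair whose phase discrepancy from $2\pi\Z$ is at least $\delta/2$---not $\delta$, as your construction would give---and the halving traces back to the error accumulation in that corollary. This is exactly why the paper's cosine bound is $\cos(\delta/2)$ rather than your $\cos(\delta)$, and why the final constant is $\tfrac{11}{48}$ rather than the $\tfrac{11}{24}$ you obtain. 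Your remaining manipulations with $\sqrt{2+2\cos(\cdot)}$ and the Taylor estimate are sound (the dropped factor of $2$ in writing $1 - g^{-k}(1-\cos(\delta/2))$ instead of $1 - 2g^{-k}(1-\cos(\delta/2))$ only weakens the bound, so it is harmless), but they rest on a phase-isolation step that fails for $k\geq 3$.
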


\begin{proof}
  If $\vec \theta \in R_{\delta}^B$, then there is some $\{i,j\},a$ such that $\theta_{\{i,j\},a}$ is not expressable as $\frac{2\pi n }{g} + \zeta$ with $n \in \Z$ and $|\zeta| < \delta$. By Corollary \ref{mod_2pi_g_general}, this means that there is a pair of vectors $\vec x, \vec y \in (\Z_g)^k$ such that for any $z \in \Z$, the equation \( \vec \theta \cdot Z(\vec x) - \vec \theta \cdot Z(\vec y) = 2 \pi z + \epsilon_1 \) requires that $|\epsilon_1| > \delta/2$; in other words, $\vec \theta \cdot Z(\vec x) $ and $\vec \theta \cdot Z(\vec y) $ are not within $\delta/2$ of each other when taken modulo $2\pi$. If we choose $z$ and $\epsilon_1$ so that $|\epsilon_1| \leq \pi$, then it follows that
  \[\cos(\vec \theta \cdot Z(\vec x) - \vec \theta \cdot Z(\vec y)) = \cos(\epsilon_1) \leq \cos(\delta / 2). \]
By repeating the same arguments that led to \eqref{rmd_A_bd_begrudgingly} with this $\vec x$ and $\vec y$, it follows that
  \[|\Phi(\vec \theta)| \leq 1 - g^{-k} \left[ \frac{(\delta/2)^2}{4} - \frac{(\delta/2)^4}{48} \right] \]
and because $\delta < 2$, the desired result follows.
\end{proof}

Finally, we put together the bounds on $|\Phi(\vec \theta)|$ over $R^A_{\delta}$ and $R^B_{\delta}$ to obtain the bound on the $R_{\delta}$ component of Proposition \ref{useful_integral}.

\begin{proposition} \label{remainder_bound}
  If $\delta < \frac{8}{5} g^{-k-3}k^{-2}$, then 
   \[ \left|\frac{1}{(2 \pi)^{d}} \int_{R_{\delta}} \Phi(\vec \theta)^t \ud \vec \theta \right| \leq \exp \left(- \frac{11}{192} g^{-k} t \delta^2  \right).\]
\end{proposition}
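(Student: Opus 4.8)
## Proof proposal

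The plan is to bound the integral over $R_\delta$ by the supremum of $|\Phi(\vec\theta)|^t$ on that set, times the volume of the region, and then to show the exponential decay of the supremum dominates. First I would write
\[
\left| \frac{1}{(2\pi)^d} \int_{R_\delta} \Phi(\vec\theta)^t \ud \vec\theta \right| \leq \frac{1}{(2\pi)^d} \int_{R_\delta} |\Phi(\vec\theta)|^t \ud \vec\theta \leq \frac{\mathrm{vol}(R_\delta)}{(2\pi)^d} \cdot \sup_{\vec\theta \in R_\delta} |\Phi(\vec\theta)|^t \leq \sup_{\vec\theta \in R_\delta} |\Phi(\vec\theta)|^t,
\]
using that $R_\delta \subseteq [-\pi,\pi)^d$ has volume at most $(2\pi)^d$. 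So it suffices to bound $\sup_{R_\delta} |\Phi(\vec\theta)|$. Using the decomposition $R_\delta = R_\delta^A \cup R_\delta^B$ from \eqref{rmd_decomposition}, I would invoke Lemma \ref{remainder_A_bd} and Lemma \ref{remainder_B_bd}, both of which apply since $\delta < \frac{8}{5} g^{-k-3} k^{-2}$ forces $\delta < \pi/g$. On $R_\delta^A$ we get $|\Phi(\vec\theta)| \leq 1 - \frac{1}{10} g^{-k-2}$, and on $R_\delta^B$ we get $|\Phi(\vec\theta)| \leq 1 - \frac{11}{48} g^{-k} (\delta/2)^2$.

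The next step is to see which of the two bounds is the weaker (larger) one. Since $\delta < \frac{8}{5} g^{-k-3} k^{-2}$ is very small, $(\delta/2)^2$ is far smaller than $g^{-2}$, so the $R_\delta^B$ bound $1 - \frac{11}{48} g^{-k} (\delta/2)^2$ is the larger of the two; hence on all of $R_\delta$ we have $|\Phi(\vec\theta)| \leq 1 - \frac{11}{48} g^{-k} (\delta/2)^2 = 1 - \frac{11}{192} g^{-k} \delta^2$. Then I would apply the elementary inequality $1 - u \leq e^{-u}$ (valid for all real $u$, in particular $u = \frac{11}{192} g^{-k} \delta^2 \geq 0$), raise to the $t$-th power, and conclude
\[
\sup_{\vec\theta \in R_\delta} |\Phi(\vec\theta)|^t \leq \left( 1 - \frac{11}{192} g^{-k} \delta^2 \right)^t \leq \exp\left( - \frac{11}{192} g^{-k} t \delta^2 \right),
\]
which is exactly the claimed bound.

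I expect the only real subtlety — hardly an obstacle — is the bookkeeping to confirm that the $R_\delta^B$ bound genuinely dominates the $R_\delta^A$ bound under the stated hypothesis on $\delta$, i.e. checking $\frac{11}{192} g^{-k}\delta^2 \leq \frac{1}{10} g^{-k-2}$, which reduces to $\delta^2 \leq \frac{192}{110} g^{-2}$, comfortably true since $\delta < \frac{8}{5} g^{-k-3} k^{-2} < g^{-1}$ for all $g \geq 2$, $k \geq 3$. Everything else is the standard ``supremum times volume, then $1-u \leq e^{-u}$'' argument, and the two pointwise bounds on $|\Phi|$ have already been established in the preceding lemmas.
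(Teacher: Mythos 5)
Your proof is correct and takes essentially the same approach as the paper: decompose $R_\delta = R_\delta^A \cup R_\delta^B$, verify that the hypothesis forces $\delta < 1/g$ so the Lemma~\ref{remainder_B_bd} bound is the weaker (larger) of the two, then bound the integral by the volume-normalized supremum and apply $1-u \leq e^{-u}$. The only cosmetic difference is that you spell out the arithmetic comparison $\frac{11}{192}g^{-k}\delta^2 \leq \frac{1}{10}g^{-k-2}$ explicitly, whereas the paper simply asserts $\delta < 1/g$ suffices.
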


\begin{proof}
  Suppose $\delta < \frac{8}{5} g^{-k-3} k^{-2}$. Let $\vec \theta \in R_{\delta}$; since $\delta < \pi/g$, by \eqref{rmd_decomposition} either $\vec \theta \in R_{\delta}^A$ or $\vec \theta \in R_{\delta}^B$, and the hypotheses of both Lemmas \ref{remainder_A_bd} and \ref{remainder_B_bd} apply. The assumptions on $\delta$ imply also that $\delta < 1/g$, so the bound on $|\Phi(\vec \theta)|$ in Lemma \ref{remainder_B_bd} is higher than that of Lemma \ref{remainder_A_bd}; hence, we can assert that $|\Phi(\vec \theta)| \leq 1 - \frac{11}{48} g^{-k} \left(\frac{\delta}{2} \right)^2$. Since $R_\delta \subset [-\pi, \pi)^d$ and $1 - x \leq e^{-x}$, we conclude that
  \begin{align*}
    \left| \frac{1}{(2\pi)^d} \int_{R_{\delta}} \Phi(\vec \theta)^t \ud \vec \theta \right| &\leq \frac{1}{(2 \pi)^d} \int_{R_{\delta}} |\Phi(\vec \theta)^t| \ud \vec \theta \\
    & \leq \left[  1 - \frac{11}{48} g^{-k} \left( \frac{\delta}{2} \right)^2 \right]^t \\
    & \leq \exp \left( - \frac{11}{192}t g^{-k} \delta^2 \right)
  \end{align*}
as desired.
\end{proof}

\section{Bounds in the Primary Region} \label{S: primary}

The goal of this section is to obtain sharp estimates on $|\Phi(\vec \theta)|$ in the region $B_{\delta}(\vec 0)$. Our first task is to calculate and estimate quantities of the form $\E[(\vec \theta \cdot Z(\vec x))^p]$ for $p = 1, 2, 3, 4$, where the expectation denotes that $\vec x$ is drawn randomly and uniformly from $(\Z_g)^k$. 

\begin{proposition} \label{first_moment}
  For any $\vec \theta \in \R^d$, $\E[\vec \theta \cdot Z(\vec x)] = 0$.
\end{proposition}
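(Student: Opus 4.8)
The plan is to compute the expectation directly from the definition of $Z$ and exploit the symmetry of the uniform distribution on $(\Z_g)^k$. Since $\vec\theta\cdot Z(\vec x)$ is a fixed linear combination of the coordinates $[Z(\vec x)]_{\{i,j\},a}$, linearity of expectation reduces the problem to showing that $\E\bigl[[Z(\vec x)]_{\{i,j\},a}\bigr]=0$ for every index $(\{i,j\},a)$.

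First I would fix a pair $\{i,j\}$ and an element $a\in\Z_g\setminus\{0\}$, and observe that when $\vec x$ is uniform on $(\Z_g)^k$, the difference $x_i-x_j$ is uniformly distributed on $\Z_g$. Hence $x_i-x_j=a$ with probability $1/g$ and $x_i-x_j\neq a$ with probability $1-1/g$. Plugging into \eqref{defn_map},
\begin{equation*}
  \E\bigl[[Z(\vec x)]_{\{i,j\},a}\bigr] = \frac{1}{g}\left(1-\frac{1}{g}\right) + \left(1-\frac{1}{g}\right)\left(-\frac{1}{g}\right) = 0.
\end{equation*}
Then by linearity of expectation, $\E[\vec\theta\cdot Z(\vec x)] = \sum_{\{i,j\},a}\theta_{\{i,j\},a}\,\E\bigl[[Z(\vec x)]_{\{i,j\},a}\bigr] = 0$, which is the claim.

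There is essentially no obstacle here; the only point requiring a line of justification is that $x_i-x_j$ is uniform on $\Z_g$, which follows because for any fixed value $c$, summing over the $g^{k-2}$ choices of the remaining coordinates and the $g$ pairs $(x_i,x_j)$ with $x_i-x_j=c$ gives the same count for every $c$. This proposition is really a warm-up for the harder second, third, and fourth moment computations to follow, so I would keep the argument brief.
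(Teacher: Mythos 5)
Your argument is correct and matches the paper's proof essentially verbatim: both fix a coordinate $(\{i,j\},a)$, use the uniformity of $x_i - x_j$ on $\Z_g$ to show that coordinate of $Z(\vec x)$ has mean zero, and then conclude by linearity of expectation. No differences worth noting.
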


\begin{proof}
Fix some $\{i,j\},a$. That component of the dot product will be $(1-\frac{1}{g})\theta_{\{i,j\},a}$ if $x_i - x_j = a$, and will be $-\frac{1}{g} \theta_{\{i,j\},a}$ if $x_i - x_j \neq a$. If $\vec x$ is chosen randomly and uniformly from $(\Z_g)^k$, then $x_i - x_j$ will be a random, uniformly-distributed element of $G$. Thus, the expected value of the $\{i,j\},a$ component of the dot product will be $\frac{1}{g}(1-1/g)\theta_{\{i,j\},a} -\frac{g-1}{g}\frac{1}{g} \theta_{\{i,j\},a} = 0.$ 
\end{proof}

To describe the second moment, we will first define a $d \times d$ matrix $M$ which is indexed in the same way as $\R^d$:
  \begin{equation} \label{matrix_defn}
    M_{(\{i, j\}, a), (\{m, n\}, b)} =
      \begin{cases}
        \frac{g-1}{g^2}, & \textrm{if } \{i, j\} = \{m, n\} \textrm{ and } a = b \\
        -\frac{1}{g^2}, & \textrm{if } \{i,j\} = \{m, n\} \textrm{ and } a \neq b \\
        0, & \textrm{if } \{i,j\} \neq \{m, n\}.
      \end{cases}
  \end{equation}
  
\begin{proposition} \label{second_moment}
  For any $\vec \theta \in \R^d$, $\E[(\vec \theta \cdot Z(\vec x))^2] = \vec \theta^T M \vec \theta$.
\end{proposition}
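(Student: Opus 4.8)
The plan is to expand the square and reduce the claim to a computation of the second moments of the individual coordinates of $Z(\vec x)$. Writing $\vec \theta \cdot Z(\vec x) = \sum_{\{i,j\},a} \theta_{\{i,j\},a} [Z(\vec x)]_{\{i,j\},a}$, squaring, and using linearity of expectation gives
\[ \E\left[(\vec \theta \cdot Z(\vec x))^2\right] = \sum_{\{i,j\},a} \sum_{\{m,n\},b} \theta_{\{i,j\},a}\, \theta_{\{m,n\},b}\, \E\left[[Z(\vec x)]_{\{i,j\},a}\,[Z(\vec x)]_{\{m,n\},b}\right], \]
so it suffices to show that $\E[[Z(\vec x)]_{\{i,j\},a}\,[Z(\vec x)]_{\{m,n\},b}] = M_{(\{i,j\},a),(\{m,n\},b)}$ for every choice of the two index-pairs; equivalently, that $\E[Z(\vec x) Z(\vec x)^T] = M$. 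I would then split into the three cases appearing in the definition \eqref{matrix_defn} of $M$.

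When $\{i,j\} = \{m,n\}$, note that since $\vec x$ is uniform on $(\Z_g)^k$, the difference $x_i - x_j$ is uniformly distributed on $\Z_g$. If also $a = b$, then $[Z(\vec x)]_{\{i,j\},a}$ equals $1 - 1/g$ with probability $1/g$ and $-1/g$ with probability $(g-1)/g$, so its second moment is $\frac1g\left(1-\frac1g\right)^2 + \frac{g-1}{g}\cdot\frac{1}{g^2} = \frac{g-1}{g^2}$. If instead $a \neq b$, then at most one of the events $\{x_i - x_j = a\}$, $\{x_i - x_j = b\}$ occurs (each with probability $1/g$), on either of which the product $[Z(\vec x)]_{\{i,j\},a}[Z(\vec x)]_{\{i,j\},b}$ equals $-\frac1g\left(1-\frac1g\right)$, while on the complementary event (probability $(g-2)/g$) the product equals $1/g^2$; the expectation works out to $-1/g^2$. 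These match the first two cases of \eqref{matrix_defn}.

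The remaining case $\{i,j\} \neq \{m,n\}$ is the only one requiring care, since when the two pairs of rows share an index the differences $x_i - x_j$ and $x_m - x_n$ are not independent. I would handle it by a conditioning argument. Because both pairs have size two and are distinct, there is an index $\ell \in \{i,j\} \setminus \{m,n\}$. Condition on all coordinates $(x_r)_{r \neq \ell}$ of $\vec x$. The factor $[Z(\vec x)]_{\{m,n\},b}$ does not involve $x_\ell$, so it is determined by the conditioning, while $x_\ell$ remains uniform on $\Z_g$ and independent of the rest; hence, conditionally, the difference $x_i - x_j$ is still uniform on $\Z_g$, and the computation in the proof of Proposition \ref{first_moment} shows that the conditional expectation of $[Z(\vec x)]_{\{i,j\},a}$ is $0$. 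Therefore the conditional expectation of the product vanishes, and averaging over the conditioning gives $\E[[Z(\vec x)]_{\{i,j\},a}\,[Z(\vec x)]_{\{m,n\},b}] = 0$, matching the third case of \eqref{matrix_defn}. This establishes $\E[Z(\vec x)Z(\vec x)^T] = M$ and hence the proposition.
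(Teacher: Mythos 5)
Your proof is correct and follows the same overall structure as the paper's: expand the quadratic form, reduce to computing $\E[\xi_{\{i,j\},a}\xi_{\{m,n\},b}]$ entry by entry, and split into the three cases of \eqref{matrix_defn}. The first two cases (same pair, same or different group element) are computed exactly as in the paper. The only genuine difference is in the third case, $\{i,j\}\neq\{m,n\}$: the paper establishes that $\xi_{\{i,j\},a}$ and $\xi_{\{m,n\},b}$ are fully independent by directly computing joint probabilities, splitting further into the subcases $|\{i,j\}\cap\{m,n\}| = 0$ and $|\{i,j\}\cap\{m,n\}| = 1$. You instead pick $\ell \in \{i,j\}\setminus\{m,n\}$, condition on $(x_r)_{r\neq\ell}$, observe that $\xi_{\{m,n\},b}$ is determined by the conditioning while $x_i - x_j$ remains conditionally uniform, and invoke the tower property to conclude the cross-moment vanishes. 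This is a cleaner route: it avoids the subcase split and delivers the weaker but sufficient conclusion (zero covariance rather than full independence) directly. It is worth noting the reason the conditioning works, which you correctly identify: $x_i - x_j$ depends linearly on $x_\ell$, which stays uniform and independent of the fixed coordinates, so the first-moment computation from Proposition \ref{first_moment} applies verbatim in the conditional setting.
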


\begin{proof}
  For convenience of notation, we will let $\vec \xi = Z(\vec x)$; that is, $\vec \xi$ is a vector in $\R^d$ chosen randomly and uniformly from the collection $\{Z(\vec x): \vec x \in (\Z_g)^k\}$. As in the proof of Proposition \ref{first_moment}, for a fixed $\{i,j\},a$, the random variable $\xi_{\{i,j\},a}$ is $1 - 1/g$ with probability $1/g$ and is $-1/g$ with probability $1 - 1/g$. 
  We note that 
  \begin{align*}
     \E[(\vec \theta \cdot \vec \xi)^2] &= \E \left[ \left(\sum_{\{i,j\},a} \theta_{\{i,j\},a} \xi_{\{i,j\},a} \right)^2 \right] \\
     & = \sum_{(\{i,j\},a),(\{m,n\},b)} \theta_{\{i,j\},a} \theta_{\{m,n\},b} \E \left[\xi_{\{i,j\},a}\xi_{\{m,n\},b} \right].
  \end{align*}
Our goal is therefore to show that the expected value $\E[\xi_{\{i,j\},a} \xi_{\{m,n\},b}]$ agrees with the $(\{i,j\},a), (\{m,n\},b)$ entry of $M$.  

We first consider the case where $\{i,j\} = \{m,n\}$ and $a = b$. To compute $\E[(\xi_{\{i,j\},a})^2]$, we recall that the random variable $(\xi_{\{i,j\},a})^2$ is $(1-1/g)^2$ with probability $1/g$ and is $1/g^2$ with probability $1-1/g$; hence, its expectation is $\frac{g-1}{g^2}$, as desired. Next, we consider the case where $\{i,j\} = \{m,n\}$, but $a \neq b$. To compute $\E[\xi_{\{i,j\},a}\xi_{\{i,j\},b}]$, we note that there are two possibilities: if $x_i - x_j \in \{a,b\}$, then $\xi_{\{i,j\},a} \xi_{\{i,j\},b} = -(1-1/g)1/g$, and if $x_i - x_j \not \in \{a,b\}$, then $\xi_{\{i,j\},a} \xi_{\{i,j\},b} = 1/g^2$. The former will occur with probability $2/g$, and the latter will occur with probability $1 - 2/g$. Hence, the expected value is $-\frac{1}{g^2}$.

Finally, if the pairs $\{i,j\}$ and $\{m,n\}$ are not the same, we claim that the variables $\xi_{\{i,j\},a}$ and $\xi_{\{m,n\},b}$ are independent. We will show this by separately considering the cases where $|\{i,j\} \cap \{m,n\}|$ is $0$ or $1$. If $\{i,j\}$ and $\{m,n\}$ are disjoint, then the expressions $x_i - x_j$ and $x_m - x_n$ are clearly independent of one another since $\vec x$ is chosen uniformly from $(\Z_g)^k$. If the pairs are of the form $\{i,j\}$ and $\{i,m\}$, then since $x_i,\ x_j,\ x_m,\ x_i - x_j,$ and $x_i - x_m$ are all uniformly distributed on $\Z_g$ and the first three are independent of each other, then for any $g_1, g_2 \in \Z_g$, we have 
  \begin{align*}
    \PB(x_i - x_j = g_1,\ x_i - x_m = g_2) & = \sum_{c \in Z_g} \PB(x_i = c,\ x_j = c - g_1,\ x_m = c - g_2) \\
    & = \sum_{c \in \Z_g} \PB(x_i = c) \PB(x_j = c-g_1) \PB(x_m = c-g_2) \\
    & = \sum_{c \in \Z_g} \frac{1}{g^3} = \frac{1}{g^2}
  \end{align*}
which is also equal to $\PB(x_i - x_j = g_1) \PB(x_i - x_m = g_2)$. A similar argument can be made for any configuration of $\{i,j\},\{m,n\}$ with exactly one shared element. Hence, whether $|\{i,j\} \cap \{m,n\}|$ is $0$ or $1$, the variables $\xi_{\{i,j\},a}$ and $\xi_{\{m,n\},b}$ are independent, and consequently $\E[\xi_{\{i,j\},a} \xi_{\{m,n\},b}] = \E[\xi_{\{i,j\},a}] \E[\xi_{\{m,n\},b}] = 0$ as in Proposition \ref{first_moment}.
\end{proof}

We will not need to compute the third and fourth moments of $\vec \theta \cdot Z(\vec x)$ explicitly; rather, we will only require estimates of those moments. However, we will need to compute the determinant of $M$. In all that follows, we will use $I_n$ to denote the $n \times n$ identity matrix. We will use the following well-known identity, which can be found (for instance) in \cite{harville}*{Cor.\ 18.1.2}.

\begin{lemma}[Sylvester's Determinant Identity]
  For any $n \times m$ matrix $S$ and $m \times n$ matrix $U$, we have $\det(I_n + SU) = \det(I_m + US).$
\end{lemma}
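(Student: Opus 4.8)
The plan is to exhibit both $I_n + SU$ and $I_m + US$ as diagonal blocks of a single $(n+m)\times(n+m)$ block matrix after suitable unit-determinant row and column operations, and then to compute that matrix's determinant in two ways. Concretely, I would introduce
\[ P = \begin{pmatrix} I_n & -S \\ U & I_m \end{pmatrix} \]
and manipulate it with the elementary block matrix $E = \begin{pmatrix} I_n & S \\ 0 & I_m \end{pmatrix}$, which is block upper triangular with identity diagonal blocks and hence satisfies $\det E = 1$.

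First I would multiply on the left: a direct block computation gives
\[ E P = \begin{pmatrix} I_n + SU & 0 \\ U & I_m \end{pmatrix}, \]
which is block lower triangular, so $\det(EP) = \det(I_n + SU)\det(I_m) = \det(I_n + SU)$; since $\det E = 1$, this yields $\det P = \det(I_n + SU)$. Next I would multiply $P$ on the right by the same $E$: another block computation gives
\[ P E = \begin{pmatrix} I_n & 0 \\ U & I_m + US \end{pmatrix}, \]
which is again block lower triangular, so $\det(PE) = \det(I_n)\det(I_m + US) = \det(I_m + US)$, and hence $\det P = \det(I_m + US)$. Equating the two evaluations of $\det P$ proves the identity.

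There is no genuine obstacle here; the proof reduces to two routine $2\times 2$ block products and the standard fact that a block triangular matrix with identity blocks on one part of the diagonal has determinant equal to the determinant of the remaining diagonal block (provable by cofactor expansion, or by factoring through elementary operations and using multiplicativity of $\det$). The one point worth stating explicitly is that the argument is purely formal and uses only that the entries lie in a commutative ring, so in particular it is valid over $\R$ as required for the matrix $M$ in the sequel.
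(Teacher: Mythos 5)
Your proof is correct: both block products are computed accurately, $\det E = 1$ since $E$ is block upper triangular with identity diagonal blocks, and equating the two evaluations of $\det P$ gives the identity. For the record, the paper does not actually prove this lemma at all — it simply cites it as Corollary~18.1.2 of Harville's textbook and moves on. So there is no ``paper approach'' to compare against; you have supplied a self-contained proof where the paper offered only a reference. Your argument is the standard one (essentially Schur-complement bookkeeping packaged as two determinant evaluations of the same bordered matrix), and your closing remark that it works over any commutative ring is accurate, though unnecessary for the paper's purposes since $M$ is a real matrix.
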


\begin{proposition} \label{matrix_det}
  With $M$ as defined by \eqref{matrix_defn}, we have $\det(M) = g^{-g \binom{k}{2}}$. 
\end{proposition}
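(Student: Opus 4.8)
The plan is to exploit the block-diagonal structure of $M$. By definition \eqref{matrix_defn}, $M$ has no coupling between distinct pairs $\{i,j\}$, so $M$ is block-diagonal with one $(g-1)\times(g-1)$ block for each of the $\binom{k}{2}$ pairs, and every block is the same matrix $N$, where $N_{ab} = \frac{g-1}{g^2}$ if $a=b$ and $N_{ab} = -\frac{1}{g^2}$ if $a\neq b$. Hence $\det(M) = (\det N)^{\binom{k}{2}}$, and it suffices to show $\det N = g^{-g}$.

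To compute $\det N$, I would write $N = \frac{1}{g}I_{g-1} - \frac{1}{g^2}J$, where $J$ is the $(g-1)\times(g-1)$ all-ones matrix: indeed $\frac{1}{g} - \frac{1}{g^2} = \frac{g-1}{g^2}$ on the diagonal and $-\frac{1}{g^2}$ off the diagonal, as required. Then
\[
  \det N = \det\!\left( \tfrac{1}{g} I_{g-1} - \tfrac{1}{g^2} J \right) = g^{-(g-1)} \det\!\left( I_{g-1} - \tfrac{1}{g} J \right).
\]
Now $J = \mathbf{1}\mathbf{1}^T$ where $\mathbf 1 \in \R^{g-1}$ is the all-ones column vector, so $\frac{1}{g}J = S U$ with $S = \frac{1}{g}\mathbf 1$ (a $(g-1)\times 1$ matrix) and $U = \mathbf 1^T$ (a $1 \times (g-1)$ matrix). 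Applying Sylvester's Determinant Identity gives
\[
  \det\!\left( I_{g-1} - \tfrac{1}{g}\mathbf 1 \mathbf 1^T \right) = \det\!\left( I_1 - \tfrac{1}{g}\mathbf 1^T \mathbf 1 \right) = 1 - \tfrac{g-1}{g} = \tfrac{1}{g}.
\]
Combining, $\det N = g^{-(g-1)} \cdot g^{-1} = g^{-g}$, and therefore $\det(M) = (g^{-g})^{\binom{k}{2}} = g^{-g\binom{k}{2}}$, as claimed.

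There is no real obstacle here; the only thing to be careful about is the sign convention in Sylvester's identity (it is stated for $I + SU$, so one takes $S = -\frac{1}{g}\mathbf 1$ or equivalently absorbs the sign into $U$), and the bookkeeping of which dimension each identity matrix has. The mild subtlety is noticing that the relevant block is indexed by the $g-1$ \emph{nonzero} elements of $\Z_g$ — the $(\{i,j\},0)$ coordinate having been deliberately omitted — so each block is genuinely $(g-1)\times(g-1)$ rather than $g \times g$; this is exactly what makes the final exponent come out to $g\binom{k}{2}$ rather than something else.
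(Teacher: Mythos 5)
Your proof is correct and follows essentially the same route as the paper: exploit the block-diagonal structure, write each $(g-1)\times(g-1)$ block as $g^{-1}I_{g-1} - g^{-2}\mathbf{1}\mathbf{1}^T$, and apply Sylvester's Determinant Identity to reduce to a $1\times 1$ determinant. The paper's proof is a terser version of the same argument, so there is nothing further to reconcile.
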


\begin{proof}
We note that $M$ is a block diagonal matrix with $\binom k 2$ repeated copies of the same $(g - 1) \times (g-1)$ submatrix $S$ with $\frac{g-1}{g^2}$ in the diagonal entries and $- \frac{1}{g^2}$ in the off-diagonal entries. We can therefore express $S$ as $g^{-1} I_{g-1} - g^{-2} \vec 1 \vec 1^T$, where $\vec 1$ is the column vector of length $g-1$ consisting of all ones. By Sylvester's Determinant Identity,  $\det(S) = g^{-(g-1)} \det (I_{g-1} - g^{-1} \vec 1 \vec 1^T ) = g^{-(g-1)} \det (1 - g^{-1} \vec 1^T \vec 1 ) = g^{-(g-1)}( 1 - \frac{g-1}{g}) = g^{-g}$; the desired result follows from the block structure of $M$.
\end{proof}

Finally, we state the lemma that gives the desired estimates on $|\Phi(\vec \theta)|$ in the region $B_{\delta}(\vec 0)$.

\begin{lemma} \label{primary_region_estimates}
  Let $\delta > 0$ and $\vec \theta \in B_{\delta}(\vec 0)$. Then there is a function $\epsilon \colon B_{\delta}(\vec 0) \to \R$ such that
  \begin{equation} \label{primary_real_bd}
    \real(\Phi(\vec \theta)) = e^{-\frac{1}{2} \vec \theta^T M \vec \theta} (1 + \epsilon(\vec \theta))
  \end{equation}
and $|\epsilon(\vec \theta)| \leq \frac{1}{6} (d \delta)^4 e^{\frac{1}{2} d^2 \delta^2}$. Moreover,
  \begin{equation} \label{primary_imag_bd}
    |\imag(\Phi(\vec \theta))| \leq \frac{(d \delta)^3}{6}.
  \end{equation}
Further, if $d \delta < 1$, then
  \begin{equation} \label{primary_real_lower}
    \real(\Phi(\vec \theta)) > 1/3.
  \end{equation}
\end{lemma}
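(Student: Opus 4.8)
The plan is to expand $\Phi(\vec\theta) = \E[e^{i\vec\theta\cdot Z(\vec x)}]$ as a Taylor series in the scalar random variable $W = \vec\theta\cdot Z(\vec x)$, separate real and imaginary parts, and control all tails using the bounds \eqref{taylor_bd_1} and \eqref{taylor_bd_2} together with the first and second moment computations (Propositions \ref{first_moment} and \ref{second_moment}) and crude moment estimates. The essential observation is that on $B_\delta(\vec 0)$ we have $|W| = |\vec\theta\cdot Z(\vec x)| \le d\delta$ deterministically, since $Z(\vec x)$ has $d = \binom k2(g-1)$ coordinates each of absolute value at most $1-1/g < 1$ and each $|\theta_{\{i,j\},a}| < \delta$. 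Hence every moment satisfies $|\E[W^p]| \le (d\delta)^p$, which is what drives all three estimates.

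For the real part, write $\real(\Phi(\vec\theta)) = \E[\cos W]$. Using \eqref{taylor_bd_2} with $j = 3$ (the cosine only sees even-degree terms, so this really controls $\cos$ up through $W^2$ with remainder of order $|W|^4$), I would get $\E[\cos W] = 1 - \tfrac12\E[W^2] + \E[\rho]$ where $|\rho| \le \tfrac{|W|^4}{24}$, so $|\E[\rho]| \le \tfrac{(d\delta)^4}{24}$. By Proposition \ref{first_moment} the linear term vanishes, and by Proposition \ref{second_moment}, $\E[W^2] = \vec\theta^T M\vec\theta$. Now compare this to $e^{-\frac12\vec\theta^T M\vec\theta}$: using \eqref{taylor_bd_1} with $a = \tfrac12\vec\theta^T M\vec\theta \ge 0$ and $j = 1$, we have $e^{-a} = 1 - a + \rho'$ with $|\rho'| \le \tfrac{a^2}{2}$; and $0 \le \vec\theta^T M\vec\theta = \E[W^2] \le (d\delta)^2$, so $a \le \tfrac12(d\delta)^2$. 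Combining, $\real(\Phi(\vec\theta)) - e^{-a} = (\text{something of size} \lesssim (d\delta)^4)$, and then writing $\epsilon(\vec\theta) = (\real(\Phi(\vec\theta)) - e^{-a})/e^{-a} = e^{a}(\cdots)$ produces the factor $e^{\frac12 d^2\delta^2}$ (bounding $a \le \tfrac12 d^2\delta^2$ crudely) times an $O((d\delta)^4)$ term; careful bookkeeping of the constants $\tfrac{1}{24} + \tfrac{1}{8}$ (from $\rho$ and $\rho'$) should land at or below $\tfrac16$. For the imaginary part, $\imag(\Phi(\vec\theta)) = \E[\sin W]$, and \eqref{taylor_bd_2} with $j = 1$ gives $\sin W = W + \rho''$ with $|\rho''| \le \tfrac{|W|^3}{6}$; since $\E[W] = 0$ by Proposition \ref{first_moment}, we get $|\imag(\Phi(\vec\theta))| = |\E[\rho'']| \le \tfrac{(d\delta)^3}{6}$, which is exactly \eqref{primary_imag_bd}.

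Finally, for the lower bound \eqref{primary_real_lower}: when $d\delta < 1$, the error term satisfies $|\epsilon(\vec\theta)| \le \tfrac16 (d\delta)^4 e^{\frac12 d^2\delta^2} < \tfrac16 e^{1/2} < \tfrac13$, and $e^{-\frac12\vec\theta^T M\vec\theta} \ge e^{-\frac12(d\delta)^2} > e^{-1/2}$, so $\real(\Phi(\vec\theta)) \ge e^{-1/2}(1 - \tfrac13) = \tfrac23 e^{-1/2} > 1/3$ (since $e^{-1/2} > 0.6$). The main obstacle I anticipate is not conceptual but arithmetic: squeezing the accumulated constants into the stated bound $\tfrac16(d\delta)^4 e^{\frac12 d^2\delta^2}$, which requires choosing the Taylor truncation orders so that the $\min\{\cdot,\cdot\}$ in \eqref{taylor_bd_1}--\eqref{taylor_bd_2} gives the $(j+1)$-st term (the factorial-in-denominator form) rather than the looser $\tfrac{2|a|^j}{j!}$ form, and being slightly clever about absorbing the $e^{a}$ renormalization factor. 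One must also double-check that the $\rho$ remainder for $\cos$ genuinely comes in at degree four and not degree three — it does, because $\E[W]=0$ kills the stray cubic contribution, but it is worth stating explicitly.
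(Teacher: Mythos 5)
Your proposal is correct and follows essentially the same route as the paper: Taylor-expand $e^{iW}$ with $W = \vec\theta\cdot Z(\vec x)$, separate real and imaginary parts, invoke Propositions \ref{first_moment} and \ref{second_moment} for the first two moments, bound everything using $|W|\le d\delta$, and compare to the Taylor expansion of $e^{-\frac12\vec\theta^T M\vec\theta}$ — the constants $\frac{1}{24}+\frac{1}{8}=\frac{1}{6}$ combine exactly as you anticipate, and the lower bound follows as you compute. One small bookkeeping slip: for the imaginary part the cubic remainder $\frac{|W|^3}{6}$ comes from \eqref{taylor_bd_2} with $j=2$, not $j=1$ (the latter would only give a quadratic remainder); also, the parenthetical about $\E[W]=0$ "killing a stray cubic contribution" in the real part is not quite right — the cubic term in the degree-3 Taylor polynomial is purely imaginary and simply does not appear in $\real(e^{iW})$, so no moment argument is needed there.
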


\begin{proof}
  We will mimic the proof of Lemma 3.1 of \cite{wdl_levin}. 
  First, using \eqref{taylor_bd_1} with $j = 1$ shows that
    \begin{equation} \label{primary_region_bd_1}
      \left| e^{-\frac{1}{2} \vec \theta^T M \vec \theta} - \left(1 - \frac{1}{2} \vec \theta^T M \vec \theta \right) \right| \leq \frac{1}{8} (\vec \theta^T M \vec \theta)^2.
    \end{equation}
  The coefficients of $M$ are bounded between $-1$ and $1$, and the components of $\vec \theta$ are bounded between $-\delta$ and $\delta$. Hence, by the triangle inequality, 
  \begin{align}
      |\vec \theta^T M \vec \theta| & \leq \sum_{(\{i,j\},a),(\{m,n\},b)} |\theta_{\{i,j\},a} \theta_{\{m,n\},b} M_{(\{i,j\},a),(\{m,n\},b)}| \nonumber \\ 
      & \leq \sum_{(\{i,j\},a),(\{m,n\},b)} \delta^2 = d^2 \delta^2.  \label{primary_region_bd_1a}
  \end{align}
Putting this together with \eqref{primary_region_bd_1} shows that 
  \begin{equation} \label{primary_region_bd_2}
      \left| e^{-\frac{1}{2} \vec \theta^T M \vec \theta} - \left(1 - \frac{1}{2} \vec \theta^T M \vec \theta \right) \right| \leq \frac{1}{8} d^4 \delta^4.
  \end{equation}

Next, let $\vec x \in (\Z_g)^k$. Using \eqref{taylor_bd_2} with $j = 3$ gives
  \begin{equation*}
    \left| e^{i \vec \theta \cdot Z(\vec x)} - \left[ 1 + i \vec \theta \cdot Z(\vec x) - \frac{(\vec \theta \cdot Z(\vec x))^2}{2} - \frac{i (\vec \theta \cdot Z(\vec x))^3}{6} \right]  \right| \leq \frac{1}{24} (\vec \theta \cdot Z(\vec x))^4.
  \end{equation*}
By examining only the real part of the term in the absolute value and recalling that $|\real(z)| \leq |z|$ for any $z \in \C$, we have
  \begin{equation*} 
    \left| \real(e^{i \vec \theta \cdot Z(\vec x)}) - \left[ 1 - \frac{(\vec \theta \cdot Z(\vec x))^2}{2} \right]  \right| \leq \frac{1}{24} (\vec \theta \cdot Z(\vec x))^4.
  \end{equation*}
If $\vec x$ is chosen randomly and uniformly from $(\Z_g)^k$, then this shows that
  \begin{align*}
    & \left| \E \left[ \real(e^{i \vec \theta \cdot Z(\vec x)}) \right] - \E \left[ 1 - \frac{(\vec \theta \cdot Z(\vec x))^2}{2} \right]  \right| \nonumber \\
    & \qquad \leq \E \left| \real(e^{i \vec \theta \cdot Z(\vec x)}) - \left[ 1 - \frac{(\vec \theta \cdot Z(\vec x))^2}{2} \right] \right| \nonumber \\
    & \qquad \leq \frac{1}{24} \E \left[(\vec \theta \cdot Z(\vec x))^4 \right].
  \end{align*}
From the linearity of the $\real$ operator, we have $\E[\real(e^{i \vec \theta \cdot Z(\vec x)})] = \real(\Phi(\vec \theta))$. Thus, Proposition \ref{second_moment} shows that
\begin{equation} \label{primary_region_bd_3}
  \left| \real(\Phi(\vec \theta)) - \left[1 - \frac{1}{2}\vec \theta^T M \vec \theta \right] \right| \leq \frac{1}{24} \E \left[(\vec \theta \cdot Z(\vec x))^4 \right].
\end{equation}

Next, we seek to obtain a similar bound on $\imag(\Phi(\vec \theta))$. Using \eqref{taylor_bd_2} again with $j = 2$ shows that
  \begin{equation*}
    \left| e^{i \vec \theta \cdot Z(\vec x)} - \left[ 1 + i \vec \theta \cdot Z(\vec x) - \frac{1}{2} (\vec \theta \cdot Z(\vec x))^2 \right] \right| \leq \frac{1}{6} | \vec \theta \cdot Z(\vec x)|^3
  \end{equation*}
so examining only the imaginary part in the absolute value and using the fact that $|\imag(z)| \leq |z|$ gives
  \begin{equation*}
    \left| \imag(e^{i \vec \theta \cdot Z(\vec x)}) -  \vec \theta \cdot Z(\vec x) \right| \leq \frac{1}{6} | \vec \theta \cdot Z(\vec x)|^3.
  \end{equation*}
By the same argument as for the real part, if $\vec x \in (\Z_g)^k$ is chosen randomly and uniformly, then 
  \begin{equation*}
    \left| \imag(\Phi(\vec \theta)) - \E[\vec \theta \cdot Z(\vec x)] \right| \leq \frac{1}{6} \E [| \vec \theta \cdot Z(\vec x)|^3]
  \end{equation*}
so by Proposition \ref{first_moment},
  \begin{equation} \label{primary_region_bd_4}
    \left| \imag(\Phi(\vec \theta)) \right| \leq \frac{1}{6} \E [| \vec \theta \cdot Z(\vec x)|^3].
  \end{equation}
  
The next step is to bound the expectations in \eqref{primary_region_bd_3} and \eqref{primary_region_bd_4}. For any $\vec x \in (\Z_g)^k$, the components of $Z(\vec x)$ all have absolute value less than $1$, and the components of $\vec \theta$ all have absolute value at most $\delta$. Hence, 
  \begin{equation} \label{primary_region_bd_5}
    | \vec \theta \cdot Z(\vec x)| \leq \sum_{\{i,j\},a} |\theta_{\{i,j\},a}| \leq d \delta.
  \end{equation}
Combining this with \eqref{primary_region_bd_4} yields \eqref{primary_imag_bd}. Similarly, combining \eqref{primary_region_bd_5} with \eqref{primary_region_bd_3} yields
\begin{equation*} 
  \left| \real(\Phi(\vec \theta)) - \left[1 - \frac{1}{2}\vec \theta^T M \vec \theta \right] \right| \leq \frac{(d \delta)^4}{24}
\end{equation*}
so \eqref{primary_region_bd_2} and the triangle inequality give
\[
  \left| \real(\Phi(\vec \theta)) - e^{-\frac{1}{2} \vec \theta^T M \vec \theta} \right| \leq \frac{(d \delta)^4}{6}.
\]
Dividing both sides by $e^{-\frac{1}{2} \vec \theta^T M \vec \theta}$, then applying \eqref{primary_region_bd_1a} to the right side shows that
\[
  \left| \frac{\real(\Phi(\vec \theta))}{e^{-\frac{1}{2} \vec \theta^T M \vec \theta}} - 1 \right| \leq \frac{1}{6} (d \delta)^4 e^{\frac{1}{2} d^2 \delta^2}.
\]
Therefore, we can write
\[
  \real(\Phi(\vec \theta)) = e^{-\frac{1}{2} \vec \theta^T M \vec \theta} \left[ \frac{\real(\Phi(\vec \theta))}{e^{-\frac{1}{2} \vec \theta^T M \vec \theta}} \right] 
  = e^{-\frac{1}{2} \vec \theta^T M \vec \theta} ( 1 + \epsilon(\vec \theta))
\]
where $|\epsilon(\vec \theta)| \leq \frac{1}{6} (d \delta)^4 e^{\frac{1}{2} d^2 \delta^2}$; this establishes \eqref{primary_real_bd}.

Finally, we observe by \eqref{primary_real_bd} that
  \[
    \real(\Phi(\vec \theta)) \geq e^{-\frac{1}{2} \vec \theta^T M \vec \theta} \left(1 - \frac{1}{6} (d \delta)^4 e^{\frac{1}{2} d^2 \delta^2} \right)
  \]
so by \eqref{primary_region_bd_1a} and the assumption that $d \delta < 1$, we have
  \[
    \real(\Phi(\vec \theta)) \geq \frac{ 1 - \frac{1}{6} (d \delta)^4 e^{\frac{1}{2} d^2 \delta^2}}{e^{\frac{1}{2} \vec \theta^T M \vec \theta}} \geq \frac{1 - \frac{1}{6}e^{1/2}}{e^{1/2}} > 1/3
  \]
which proves \eqref{primary_real_lower}.
\end{proof}

\section{Proof of Main Theorem} \label{S: proof}

Our final task is to put all the pieces together to obtain suitable estimates on the return probability of the random walk. We first gather an assortment of technical lemmas.

\begin{proposition} \label{unsatisfying_transformation}
  There is a symmetric matrix $P$ such that $P^2 = M$. Moreover, there are positive constants $D_1, D_2$ which depend only on $g$ such that for all $\delta > 0$, 
  \[ [-D_1 \delta, D_1 \delta]^d \subset P[-\delta, \delta]^d \subset [-D_2 \delta, D_2 \delta]^d \]
where $P[\delta, \delta]^d = \{P \vec \mu: \vec \mu \in [-\delta, \delta]^d\}.$
\end{proposition}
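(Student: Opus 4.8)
The plan is to build $P$ directly from the spectral decomposition of $M$, then read off the containment of boxes from the fact that $P$ is a fixed (independent of $\delta$) invertible linear map. Since $M$ is real symmetric and, by Proposition \ref{matrix_det}, has nonzero determinant, it suffices to check that $M$ is positive definite; then the standard functional calculus produces a symmetric positive-definite square root $P$. To see positive definiteness, recall from Proposition \ref{second_moment} that $\vec\theta^T M \vec\theta = \E[(\vec\theta\cdot Z(\vec x))^2] \geq 0$ for every $\vec\theta$, and equality would force $\vec\theta\cdot Z(\vec x) = 0$ for all $\vec x\in(\Z_g)^k$, i.e.\ $\vec\theta\in\Lambda$ in the degenerate sense; but the block structure of $M$ (a block-diagonal matrix with $\binom k2$ copies of $S = g^{-1}I_{g-1} - g^{-2}\vec 1\vec 1^T$) shows each block $S$ is itself positive definite, since its eigenvalues are $g^{-1}$ (with multiplicity $g-2$) and $g^{-1} - g^{-2}(g-1) = g^{-2}$, both strictly positive. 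Hence $M = P^2$ with $P$ symmetric and invertible.

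Next I would establish the box containments. Because $P$ is invertible and linear, $P[-\delta,\delta]^d$ is a fixed convex symmetric polytope scaled by $\delta$; concretely, writing $\|\cdot\|_\infty$ for the sup-norm on $\R^d$, for any $\vec\mu$ with $\|\vec\mu\|_\infty \leq \delta$ we have $\|P\vec\mu\|_\infty \leq \|P\|_{\infty\to\infty}\,\delta$, where $\|P\|_{\infty\to\infty} = \max_r \sum_c |P_{rc}|$ is the maximum absolute row sum of $P$. Taking $D_2 = \|P\|_{\infty\to\infty}$ — a constant depending only on $g$, since $P$ depends only on $g$ and $k$, and in fact $P$ inherits the block structure of $M$ so each block is a fixed $(g-1)\times(g-1)$ matrix independent of $k$ — gives the right-hand inclusion. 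For the left-hand inclusion, apply the same bound to $P^{-1}$: if $\|\vec\nu\|_\infty \leq D_1\delta$ with $D_1 := 1/\|P^{-1}\|_{\infty\to\infty}$, then $\|P^{-1}\vec\nu\|_\infty \leq \|P^{-1}\|_{\infty\to\infty} D_1 \delta = \delta$, so $\vec\nu = P(P^{-1}\vec\nu) \in P[-\delta,\delta]^d$. Both $D_1, D_2$ are positive because $P$ and $P^{-1}$ are genuine matrices with finite entries, and the block-diagonal structure guarantees the relevant norms do not depend on $k$.

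The only mild subtlety — and the place I would be most careful — is the claim that the constants depend on $g$ alone and not on $k$. This follows because $M$ is block-diagonal with all blocks equal to the single matrix $S = g^{-1}I_{g-1} - g^{-2}\vec 1\vec 1^T$, so the symmetric square root $P$ is block-diagonal with all blocks equal to $S^{1/2}$; one can even write $S^{1/2}$ explicitly using the rank-one update, namely $S^{1/2} = g^{-1/2}I_{g-1} - c\,\vec 1\vec 1^T$ for the appropriate scalar $c$ solving $(g^{-1/2} - c(g-1))^2 = g^{-2}$ in the $\vec 1$-direction while fixing the orthogonal complement, but this explicit form is not needed. The maximum absolute row sums of $S^{1/2}$ and $S^{-1/2}$ are then fixed functions of $g$, so $D_1 = 1/\|S^{-1/2}\|_{\infty\to\infty}$ and $D_2 = \|S^{1/2}\|_{\infty\to\infty}$ work, completing the proof.
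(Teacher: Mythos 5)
Your proof is correct and follows essentially the same route as the paper: exhibit $M$ as block diagonal with $\binom{k}{2}$ copies of $S = g^{-1}I_{g-1} - g^{-2}\vec{1}\vec{1}^T$, take the symmetric positive-definite square root of $S$ block by block to build $P$, and observe that the resulting constants depend only on $g$ because the blocks do. The one small improvement over the paper is that you make $D_1, D_2$ explicit via the $\|\cdot\|_{\infty\to\infty}$ operator norms of the block and its inverse, where the paper simply asserts that suitable constants exist because $Q$ maps the cube to a nondegenerate set.
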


\begin{proof}
Propositions \ref{second_moment} and \ref{matrix_det} imply that $M$ is positive definite. As noted in the proof of Proposition \ref{matrix_det}, $M$ is a block diagonal matrix with $\binom k 2$ repeated copies of the same $(g-1) \times (g-1)$ submatrix $S = g^{-1} I_{g-1} - g^{-2} \vec 1 \vec 1^T$. It follows that $S$ is also positive definite, so there is a symmetric, positive definite matrix (call it $Q$) such that $Q^2 = S$. The linear transformation corresponding to $Q$ maps $[-1, 1]^{g-1}$ to a nondegenerate subset of $\R^{g-1}$; thus, there are constants $D_1, D_2$ such that $[-D_1, D_1]^{g-1} \subset Q[-1, 1]^{g-1} \subset [-D_2, D_2]^{g-1}$. Since $S$ and $Q$ depend only on $g$, the same is true of $D_1$ and $D_2$.

The block diagonal matrix $P$ consisting of $\binom k 2$ repeated copies of $Q$ is therefore a symmetric, positive definite matrix for which $P^2 = M$. Moreover, it follows that $[-D_1, D_1]^d \subset P[-1, 1]^d \subset [-D_2, D_2]^d$, where $D_1$ and $D_2$ are the same constants as above which depend only on $g$. Since the transformation associated to $P$ is linear, scaling by $\delta$ completes the result.
\end{proof}


For $z \in \C$, we set $\beta(z) = \imag(z)/\real(z)$ and $\alpha(z,t) = 1 - \binom{t}{2} \beta(z)^2$.

\begin{lemma} \label{z^t_to_z}
  Let $t \geq 2$ be an integer, and let $z \in \C$ with $\real(z) > 0$ and $\alpha(z, t) > 0$.
  Then
      \begin{equation} \label{appendix_1}
        \real(z^t) \leq \real(z)^t \left( 1 + \left[ \beta(z) \right]^2 \right)^{t/2}
      \end{equation}      
    and 
      \begin{equation} \label{appendix_2}
        \real(z^t) \geq \real(z)^t \left(1 + \left[ \beta(z) \right]^2 \right)^{t/2} \left(1 + \left[ \frac{t}{\alpha(z,t)} \right]^2 \left[ \beta(z) \right]^2 \right)^{-1/2}.
      \end{equation}
\end{lemma}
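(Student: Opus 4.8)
The plan is to write $z$ in polar-like form via its real part and the ratio $\beta = \beta(z) = \imag(z)/\real(z)$, so that $z = \real(z)(1 + i\beta)$ and hence $z^t = \real(z)^t (1+i\beta)^t$. Everything then reduces to understanding $\real((1+i\beta)^t)$. First I would record the modulus identity $|1+i\beta| = (1+\beta^2)^{1/2}$, which immediately gives $|z^t| = \real(z)^t(1+\beta^2)^{t/2}$; since $\real(z^t) \le |z^t|$, this already yields the upper bound \eqref{appendix_1}. The content is therefore entirely in the lower bound \eqref{appendix_2}.

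For the lower bound, the natural approach is to write $1 + i\beta = (1+\beta^2)^{1/2} e^{i\varphi}$ with $\varphi = \arctan\beta$, so that $\real((1+i\beta)^t) = (1+\beta^2)^{t/2}\cos(t\varphi)$, and the claim becomes a lower bound on $\cos(t\varphi)$ in terms of $t$ and $\beta$. Concretely, I want to show
\[
  \cos(t\varphi) \ge \left(1 + \left[\tfrac{t}{\alpha(z,t)}\right]^2 \beta^2\right)^{-1/2}.
\]
Since $\alpha(z,t) = 1 - \binom{t}{2}\beta^2 > 0$ by hypothesis, one has $\binom t2 \beta^2 < 1$, which forces $|\beta|$ (hence $|\varphi|$) to be small, and in particular keeps $t\varphi$ in a range where $\cos(t\varphi) > 0$; I would make this quantitative first. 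Then, using $\tan(t\varphi)$, the inequality $\cos(t\varphi) \ge (1 + \tan^2(t\varphi))^{-1/2}$ is in fact an equality, so it suffices to show $\tan^2(t\varphi) \le [t/\alpha(z,t)]^2 \beta^2$, i.e. $|\tan(t\varphi)| \le \tfrac{t}{\alpha(z,t)}|\beta|$. To get this I would use $|\varphi| = |\arctan\beta| \le |\beta|$ together with a bound of the form $|\tan(t\varphi)| \le t|\varphi|/(1 - \binom t2 \varphi^2)$ or similar — a standard estimate obtained from the tangent addition formula or from bounding $\tan$ on a small interval by a secant line — and then absorb the denominator into $\alpha(z,t)$ using $|\varphi|\le|\beta|$ so that $1 - \binom t2 \varphi^2 \ge 1 - \binom t2 \beta^2 = \alpha(z,t)$.

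The main obstacle I anticipate is getting the tangent estimate with exactly the claimed constant $t/\alpha(z,t)$ rather than something slightly weaker; the naive chain $|\tan(t\varphi)| \le t|\varphi|\sec^2(\xi)$ for some $\xi$ between $0$ and $t\varphi$ needs care to control $\sec^2(\xi)$ by $1/\alpha(z,t)$. An alternative, possibly cleaner, route is to work directly with the complex number: write $(1+i\beta)^t = A_t + iB_t$ where $A_t, B_t$ are the real-part and imaginary-part polynomials in $\beta$, observe $A_t = \sum_{j}\binom{t}{2j}(-1)^j\beta^{2j} \ge 1 - \binom t2 \beta^2 = \alpha(z,t)$ after checking the alternating-series tail is controlled (again using $\binom t2\beta^2 < 1$), note $B_t^2 \le (A_t^2+B_t^2) = (1+\beta^2)^t$, and then bound
\[
  \frac{\real(z^t)}{|z^t|} = \frac{A_t}{\sqrt{A_t^2+B_t^2}} = \left(1 + \tfrac{B_t^2}{A_t^2}\right)^{-1/2} \ge \left(1 + \tfrac{(1+\beta^2)^t}{\alpha(z,t)^2}\right)^{-1/2},
\]
which, compared with the target, would actually need $B_t^2 \le t^2\beta^2 (1+\beta^2)^{t-1}$ or a similar sharper handle on $B_t$; I would check which of $B_t \le t|\beta|\cdot$(something) holds with the right power. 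Whichever form survives the bookkeeping, the hypothesis $\alpha(z,t) > 0$ is the key smallness condition that makes all the Taylor/alternating-series truncations legitimate, and I expect the final write-up to be a short computation once the tangent (equivalently, $B_t/A_t$) bound is pinned down.
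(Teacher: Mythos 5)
The paper itself does not prove this lemma; it defers to parts (i) and (iv) of Proposition~A.1 in \cite{wdl_levin}, so there is no in-paper proof to compare against. Your upper bound \eqref{appendix_1} via $\real(z^t)\le|z^t|=\real(z)^t(1+\beta^2)^{t/2}$ is correct, and so is your reduction of \eqref{appendix_2} to showing $\cos(t\varphi)\ge(1+[t/\alpha]^2\beta^2)^{-1/2}$, equivalently $|\tan(t\varphi)|\le(t/\alpha)|\beta|$, where $\varphi=\arctan\beta$.

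However, the first route you sketch would fail. The intermediate estimate $|\tan(t\varphi)|\le t|\varphi|/(1-\binom t2\varphi^2)$ followed by the substitution $|\varphi|\le|\beta|$ cannot be right, because the target inequality is already an \emph{equality} at $t=2$: writing $\alpha=1-\beta^2$, one computes $\tan(2\varphi)=2\beta/(1-\beta^2)=2\beta/\alpha$ exactly, whereas $2\varphi/(1-\varphi^2)<2\beta/\alpha$ strictly for $\varphi\ne0$ (since $\varphi<\tan\varphi=\beta$). So in fact $\tan(2\varphi)>2\varphi/(1-\varphi^2)$, the reverse of what your chain needs; the loss incurred in replacing $\beta$ by $\varphi$ has the wrong sign. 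Your second route is the correct one, but needs a simpler estimate than the $B_t^2\le t^2\beta^2(1+\beta^2)^{t-1}$ you guess at. Writing $(1+i\beta)^t=A_t+iB_t$, the hypothesis $\binom t2\beta^2<1$ makes both $\sum_j(-1)^j\binom{t}{2j}\beta^{2j}$ and $\sum_j(-1)^j\binom{t}{2j+1}\beta^{2j}$ alternating sums with strictly decreasing terms, giving $A_t\ge 1-\binom t2\beta^2=\alpha>0$ and $|B_t|\le t|\beta|$ (the respective leading terms). Then $B_t^2/A_t^2\le t^2\beta^2/\alpha^2$ and
\[
\frac{\real(z^t)}{|z^t|}=\left(1+\frac{B_t^2}{A_t^2}\right)^{-1/2}\ge\left(1+\left[\frac t\alpha\right]^2\beta^2\right)^{-1/2},
\]
which is \eqref{appendix_2}. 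Note that $A_t\ge\alpha>0$ also disposes of the positivity of $\cos(t\varphi)$ in one stroke, so no separate quantitative smallness argument for $t\varphi$ is needed.
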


\begin{proof}
  This requires only trivial modifications to parts (i) and (iv) of Proposition A.1 in \cite{wdl_levin}.
\end{proof}

\begin{lemma} \label{gaussian_estimate}
  Let $\rho$ be a positive real number.  Then
    \[\sqrt{2 \pi(1 - e^{- \rho^2/2})} \leq \int_{- \rho}^{\rho} e^{-\frac{1}{2} x^2} \ud x \leq \sqrt{2 \pi(1 - e^{-\rho^2})}.\]
\end{lemma}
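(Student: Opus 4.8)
The plan is to prove both inequalities by the same device: square the integral, turn it into a planar integral, and pass to polar coordinates. First I would write $I(\rho) = \int_{-\rho}^{\rho} e^{-\frac{1}{2}x^2}\ud x$ and observe that $I(\rho)^2 = \int_{-\rho}^{\rho}\int_{-\rho}^{\rho} e^{-\frac{1}{2}(x^2+y^2)}\ud x\ud y$, i.e.\ the integral of the radial Gaussian over the square $Q_\rho = [-\rho,\rho]^2$. Now sandwich $Q_\rho$ between the inscribed and circumscribed disks: the disk of radius $\rho$ is contained in $Q_\rho$, which in turn is contained in the disk of radius $\rho\sqrt{2}$. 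Since the integrand is positive, this gives
\[
  \int_{D(0,\rho)} e^{-\frac{1}{2}(x^2+y^2)}\ud x\ud y \;\leq\; I(\rho)^2 \;\leq\; \int_{D(0,\rho\sqrt 2)} e^{-\frac{1}{2}(x^2+y^2)}\ud x\ud y.
\]

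Next I would evaluate the two disk integrals exactly in polar coordinates. For a disk of radius $r$, $\int_{D(0,r)} e^{-\frac{1}{2}(x^2+y^2)}\ud x\ud y = \int_0^{2\pi}\int_0^r e^{-\frac{1}{2}s^2}\, s\ud s\ud\phi = 2\pi\bigl(1 - e^{-\frac{1}{2}r^2}\bigr)$. Applying this with $r=\rho$ on the left and $r=\rho\sqrt 2$ on the right yields
\[
  2\pi\bigl(1 - e^{-\rho^2/2}\bigr) \;\leq\; I(\rho)^2 \;\leq\; 2\pi\bigl(1 - e^{-\rho^2}\bigr).
\]
Taking square roots (all three quantities are nonnegative, and $I(\rho)\geq 0$) gives exactly the claimed bounds.

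There is essentially no hard part here; the only things to be careful about are the geometric containments $D(0,\rho)\subset [-\rho,\rho]^2 \subset D(0,\rho\sqrt 2)$ — the first because every point of the disk has $|x|,|y|\leq\rho$, the second because a point of the square has $x^2+y^2\leq 2\rho^2$ — and the bookkeeping of the factor $\tfrac12$ in the exponent when substituting $s^2 = x^2+y^2$ (so that $\tfrac12 r^2$ appears, not $r^2$, matching the two different arguments $\rho^2/2$ and $\rho^2$ in the statement). Monotonicity of the square root then transfers the sandwich from $I(\rho)^2$ to $I(\rho)$ without loss.
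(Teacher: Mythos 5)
Your proof is correct and follows exactly the same route as the paper: square the one-dimensional integral, sandwich the square $[-\rho,\rho]^2$ between the inscribed disk of radius $\rho$ and the circumscribed disk of radius $\rho\sqrt2$, evaluate the two disk integrals in polar coordinates, and take square roots. No discrepancies.
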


\begin{proof}
  By multiplying two copies of the integral together, applying Fubini's Theorem, and converting to polar coordinates, we have
  \[\int_0^{\rho} 2 \pi r e^{-\frac{1}{2} r^2} \ud r < \int_{[-\rho, \rho]^2} e^{-\frac{1}{2}(x^2 + y^2)} \ud y \, \ud x < \int_0^{\sqrt{2} \rho} 2 \pi r e^{-\frac{1}{2} r^2} \ud r. \]
Computing the left and right sides and taking square roots gives the result.
\end{proof}

With $D_1, D_2$ as defined in Proposition \ref{unsatisfying_transformation}, we define
  \begin{align*}
    L(g, k, t, \delta) & = [1 + t^2(d \delta)^6]^{-1/2} \left[ 1 - \frac{1}{3}(d \delta)^4 \right]^t [1 - e^{-\frac{t}{2} (D_1\delta)^2}]^{d/2},\\
    U(g, k, t, \delta) & = \left[ 1 + \frac{1}{4}(d \delta)^6 \right]^{t/2} \left[ 1 + \frac{1}{3} ( d \delta)^4 \right]^t  [1 - e^{- t (D_2 \delta)^2}]^{d/2}.
  \end{align*}

\begin{theorem} \label{return_prob_estimates_1}
 Suppose that $\delta < \frac{8}{5} g^{-k-3}k^{-2}$, and let $t$ be any positive integer multiple of $g$ such that $t < 2 (d \delta)^{-3}$. 
If $g$ is odd, or if $g$ and $t/g$ are both even, then
  \begin{equation} \label{return_prob_upper}
    \PB(X_t = \vec 0) \leq \frac{g^{\frac{g}{2} \binom{k}{2} + \binom{k-1}{2}}}{\sqrt{(2 \pi t)^{d}}} U(g , k, t, \delta) + \exp \left({- \frac{11}{192} g^{-k} t \delta^2} \right)
  \end{equation}
and
  \begin{equation} \label{return_prob_lower}
    \PB(X_t = \vec 0) \geq\frac{g^{\frac{g}{2} \binom{k}{2} + \binom{k-1}{2}}}{\sqrt{(2 \pi t)^{d}}} L(g , k, t, \delta) - \exp \left({- \frac{11}{192} g^{-k} t \delta^2}\right)  .
  \end{equation}
\end{theorem}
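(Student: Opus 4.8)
The plan is to combine the integral decomposition from Proposition \ref{useful_integral}, the remainder bound from Proposition \ref{remainder_bound}, and the primary-region estimates of Lemma \ref{primary_region_estimates} to pin down the main term $\frac{1}{(2\pi)^d}\int_{B_\delta(\vec 0)} \Phi(\vec\theta)^t \ud\vec\theta$. By Proposition \ref{useful_integral}, in the cases under consideration $\PB(X_t=\vec 0)$ equals $\frac{g^{\binom{k-1}{2}}}{(2\pi)^d}\int_{B_\delta(\vec 0)}\Phi(\vec\theta)^t\ud\vec\theta$ plus the remainder integral, and Proposition \ref{remainder_bound} already bounds the latter by $\exp(-\tfrac{11}{192}g^{-k}t\delta^2)$ in absolute value. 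So everything reduces to estimating $\int_{B_\delta(\vec 0)}\Phi(\vec\theta)^t\ud\vec\theta$ from above and below, and showing it is asymptotically $(2\pi t)^{-d/2}g^{\frac g2\binom k2}(1+o(1))$ up to the $L,U$ correction factors.

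First I would check that the hypotheses let us apply Lemma \ref{primary_region_estimates}: from $\delta < \tfrac 85 g^{-k-3}k^{-2}$ and $t < 2(d\delta)^{-3}$ one gets $d\delta < 1$, so on $B_\delta(\vec 0)$ we have $\real(\Phi(\vec\theta)) > 1/3 > 0$ and $\real(\Phi(\vec\theta)) = e^{-\frac12\vec\theta^T M\vec\theta}(1+\epsilon(\vec\theta))$ with $|\epsilon(\vec\theta)| \le \tfrac16(d\delta)^4 e^{\frac12 d^2\delta^2}$, plus $|\imag(\Phi(\vec\theta))| \le (d\delta)^3/6$. Since $\Phi(\vec\theta)^t$ appears in the integral but the Fourier inversion formula guarantees $\PB(X_t=\vec 0)$ is real, it suffices to integrate $\real(\Phi(\vec\theta)^t)$. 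I would then invoke Lemma \ref{z^t_to_z} with $z = \Phi(\vec\theta)$: here $\beta(z) = \imag(\Phi)/\real(\Phi)$ is controlled by $|\beta(z)| \le \tfrac{(d\delta)^3/6}{1/3} = (d\delta)^3/2$, and $\alpha(z,t) = 1 - \binom t2\beta(z)^2 \ge 1 - \tfrac{t^2}{2}\cdot\tfrac{(d\delta)^6}{4}$, which is positive and bounded below using $t < 2(d\delta)^{-3}$. This sandwiches $\real(\Phi(\vec\theta)^t)$ between $\real(\Phi(\vec\theta))^t$ times explicit factors of the form $(1+\beta^2)^{t/2}$ and $(1+(t/\alpha)^2\beta^2)^{-1/2}$, which I would further bound uniformly over $B_\delta(\vec 0)$ by $[1+\tfrac14(d\delta)^6]^{t/2}$ and $[1+t^2(d\delta)^6]^{-1/2}$ respectively — these are exactly the first factors appearing in $U$ and $L$. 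Next I would replace $\real(\Phi(\vec\theta))^t$ by $e^{-\frac t2\vec\theta^T M\vec\theta}$ times $(1\pm\tfrac16(d\delta)^4e^{\frac12 d^2\delta^2})^t$, absorbing the latter into the $[1\pm\tfrac13(d\delta)^4]^t$ factors (using that $e^{\frac12 d^2\delta^2} < 2$ when $d\delta<1$) — giving the second factors in $U$ and $L$.

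The remaining work is the Gaussian integral $\int_{B_\delta(\vec 0)} e^{-\frac t2\vec\theta^T M\vec\theta}\ud\vec\theta$. Since $B_\delta(\vec 0)$ is (up to the $2\pi$ wraparound, which is irrelevant for $\delta < \pi/g$) the cube $[-\delta,\delta]^d$, I would substitute $\vec\theta = P^{-1}\vec\mu$ using Proposition \ref{unsatisfying_transformation}, so $\vec\theta^T M\vec\theta = |\vec\mu|^2$ and the Jacobian is $\det(P^{-1}) = \det(M)^{-1/2} = g^{\frac g2\binom k2}$ by Proposition \ref{matrix_det}. The image of the cube is squeezed between $[-D_1\delta,D_1\delta]^d$ and $[-D_2\delta,D_2\delta]^d$, so $\int e^{-\frac t2|\vec\mu|^2}\ud\vec\mu$ over that region is between $\prod_{i=1}^d\int_{-D_1\delta}^{D_1\delta}e^{-\frac t2\mu_i^2}\ud\mu_i$ and the same with $D_2$; rescaling $\mu_i\mapsto\mu_i/\sqrt t$ and applying Lemma \ref{gaussian_estimate} with $\rho = \sqrt t D_1\delta$ (resp. $\sqrt t D_2\delta$) gives the factors $t^{-d/2}[2\pi(1-e^{-\frac t2(D_1\delta)^2})]^{d/2}$ and $t^{-d/2}[2\pi(1-e^{-t(D_2\delta)^2})]^{d/2}$. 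Collecting the Jacobian $g^{\frac g2\binom k2}$, the prefactor $g^{\binom{k-1}{2}}$ from Proposition \ref{useful_integral}, the $(2\pi)^{-d}$, and these Gaussian factors yields exactly $\frac{g^{\frac g2\binom k2 + \binom{k-1}{2}}}{\sqrt{(2\pi t)^d}}$ times $L$ or $U$, and adding the $\mp\exp(-\tfrac{11}{192}g^{-k}t\delta^2)$ from the remainder completes both inequalities. The main obstacle I anticipate is bookkeeping: making sure every multiplicative error factor is routed to the correct slot in $L$ and $U$, verifying the sign of $\alpha(z,t)$ and that Lemma \ref{z^t_to_z} applies uniformly (this is where $t < 2(d\delta)^{-3}$ is used), and handling the harmless $2\pi$-periodic wraparound so that $B_\delta(\vec 0)$ can genuinely be treated as the cube $[-\delta,\delta]^d$ for the change of variables.
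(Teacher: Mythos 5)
Your proposal is correct and follows essentially the same route as the paper's proof: reduce to the primary-region integral via Propositions~\ref{useful_integral} and~\ref{remainder_bound}, pass to $\real(\Phi(\vec\theta)^t)$, relate it to $\real(\Phi(\vec\theta))^t$ via Lemma~\ref{z^t_to_z} with the same $\beta,\alpha$ bounds, replace $\real(\Phi(\vec\theta))^t$ by the Gaussian using Lemma~\ref{primary_region_estimates}, and finish with the $P$-change of variables, the box sandwich from Proposition~\ref{unsatisfying_transformation}, and Lemma~\ref{gaussian_estimate}. The one small cosmetic difference is how you justify passing to the real part (the paper uses conjugate symmetry of $\Phi$ on the negation-invariant box $B_\delta(\vec 0)$, while you cite reality of $\PB(X_t=\vec 0)$ itself); both work.
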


  In the sequel, $\delta$ will be chosen to vary with $t$ in such a way that the exponential term above will tend to $0$ and the $U$ and $L$ terms will tend to $1$, which will complete the proof of Theorem \ref{main_theorem}.

\begin{proof}
If $g$ is odd or $t/g$ is even, Propositions \ref{useful_integral} and \ref{remainder_bound} show that
\[ \left| \PB(X_t = \vec 0) - \frac{g^{\binom{k-1}{2}}}{(2 \pi)^d} \int_{B_\delta(\vec 0)} \Phi(\vec \theta)^t \ud \vec \theta  \right| \leq \exp \left(- \frac{11}{192} g^{-k} t \delta^2 \right). \]
Therefore, to prove \eqref{return_prob_upper} and \eqref{return_prob_lower}, it will suffice to show that
  \begin{equation} \label{final_theorem_goal_1}
    \frac{L(g,k,t,\delta) g^{\frac{g}{2} \binom{k}{2} }}{\sqrt{(2 \pi t)^{d}}} \leq (2 \pi)^{-d} \int_{B_{\delta}(\vec 0)} \Phi (\vec \theta)^t \ud \vec \theta  \leq \frac{U(g,k,t,\delta) g^{\frac{g}{2} \binom{k}{2}}}{\sqrt{(2 \pi t)^{d} }} .
   \end{equation}
Moreover, since $\Phi(- \vec \theta)$ and $\Phi(\vec \theta)$ are complex conjugates and $B_{\delta}(\vec 0)$ is closed under negation, we have
  \begin{equation} \label{final_theorem_goal_2}
    \int_{B_{\delta}(\vec 0)} \Phi(\vec \theta)^t \ud \vec \theta = \int_{B_{\delta}(\vec 0)} \real(\Phi( \vec \theta)^t) \ud \vec \theta.
  \end{equation}
Our strategy will be to relate $\real(\Phi(\vec \theta)^t)$ to $[\real(\Phi(\vec \theta))]^t$ by using Lemma \ref{z^t_to_z}.

 We note that $\delta < \frac{8}{5} g^{-k-3}k^{-2} < 2 g^{-1} k^{-2} < d^{-1}$; the relationship $d \delta < 1$ will be referenced repeatedly throughout the proof. Using again the definitions $\beta(z) = \imag(z) / \real(z)$ and $\alpha(z,t) = 1 - \binom{t}{2} \beta(z)^2$, we note by \eqref{primary_imag_bd} and \eqref{primary_real_lower} that for $\vec \theta \in B_{\delta}(\vec 0)$ we have
  \begin{equation} \label{beta_upper_bound}
    |\beta(\Phi(\vec \theta))| \leq \frac{(d \delta)^3 / 6}{1/3} = \frac{(d \delta)^3}{2}.
  \end{equation}
Since we assume $t < 2(d \delta)^{-3}$, it follows that $\binom{t}{2} \beta(\Phi(\vec \theta))^2 \leq \binom{t}{2} \frac{(d \delta)^6}{4} < \frac{1}{2}$, whence $\alpha(\Phi(\vec \theta), t) > \frac{1}{2}$.  In particular, since $\alpha(\Phi(\vec \theta), t) > 0$ and since $\real(\Phi(\vec \theta)) > 0$ by \eqref{primary_real_lower}, we can use Lemma \ref{z^t_to_z}.  From \eqref{appendix_1} and \eqref{beta_upper_bound} we have 
  \begin{equation} \label{transition_upper_bound}
    \real(\Phi(\vec \theta)^t) \leq \left[ \real(\Phi(\vec \theta)) \right]^t \left( 1 + \frac{(d \delta)^6}{4} \right)^{t/2}
  \end{equation}
and from \eqref{appendix_2} we have
  \begin{align}
    \real(\Phi(\vec \theta)^t) & \geq \left[\real(\Phi(\vec \theta)) \right]^t \left( 1 + [\beta(\Phi(\vec \theta))] ^2 \right)^{t/2} \left(1 + t^2 \left[\frac{\beta(\Phi(\vec \theta))}{\alpha(\Phi(\vec \theta), t)} \right]^2 \right)^{-\frac{1}{2}}  \nonumber \\
   & \geq \left[\real(\Phi(\vec \theta)) \right]^t \left(1 + t^2 \left[\frac{\beta(\Phi(\vec \theta))}{\alpha(\Phi(\vec \theta), t)} \right]^2 \right)^{-\frac{1}{2}}. \label{transition_lower_bound}
  \end{align}
Since $\alpha(\Phi(\vec \theta), t) \geq 1/2$ and $\beta(\Phi(\vec \theta)) \leq (d \delta)^3 / 2$, it follows that 
  \[ \left[ \frac{\beta(\Phi(\vec \theta))}{\alpha(\Phi(\vec \theta), t)} \right]^2 \leq (d \delta)^6  \]
so \eqref{transition_upper_bound} and \eqref{transition_lower_bound} combine to give
  \begin{align}
    & [1 + t^2 (d \delta)^6 ]^{-1/2} \int_{B_{\delta}(\vec 0)} \left[ \real(\Phi(\vec \theta)) \right]^t \ud \vec \theta \nonumber \\
    & \qquad \quad \leq \int_{B_{\delta}(\vec 0)} \real(\Phi(\vec \theta)^t) \ud \vec \theta \nonumber \\
    & \qquad \quad \leq \left[1 + \frac{(d \delta)^6}{4} \right]^{t/2} \int_{B_{\delta}(\vec 0)} \left[ \real(\Phi(\vec \theta)) \right]^t \ud \vec \theta.  \label{power_traded}
  \end{align} 

We recall from Lemma \ref{primary_region_estimates} that there exists a function $\epsilon: B_{\delta}(\vec 0) \to \R$ such that for $\vec \theta \in B_{\delta}(\vec 0)$,
  \[ \left[\real(\Phi(\vec \theta)) \right]^t = e^{-\frac{t}{2} \vec \theta^T M \vec \theta} ( 1 + \epsilon(\vec \theta))^t \]
and $|\epsilon(\vec \theta)| < \frac{1}{6} (d \delta)^4 e^{\frac{1}{2} (d \delta)^2}$.  Because $d \delta < 1$, it follows that $e^{\frac{1}{2} (d \delta)^2} < 2$, so $|\epsilon(\vec \theta)| < \frac{1}{3}(d \delta)^4$.  Therefore,
  \begin{equation*}
   e^{-\frac{t}{2} \vec \theta^T M \vec \theta} \left[ 1 - \frac{1}{3}(d \delta)^4 \right]^t \leq \left[ \real(\Phi(\vec \theta)) \right]^t  \leq e^{-\frac{t}{2} \vec \theta^T M \vec \theta} \left[ 1 + \frac{1}{3}(d \delta)^4 \right]^t 
  \end{equation*}
and substituting these bounds into \eqref{power_traded} gives
  \begin{align}
   & [1 + t^2 (d \delta)^6 ]^{-1/2}\left[1 - \frac{1}{3}(d \delta)^4 \right]^t \int_{B_{\delta}(\vec 0)}  e^{-\frac{t}{2} \vec \theta^T M \vec \theta} \ud \vec \theta \nonumber \\
   & \qquad \quad \leq \int_{B_{\delta}(\vec 0)} \real(\Phi(\vec \theta)^t) \ud \vec \theta \nonumber \\
   & \qquad \quad \leq \left[1 + \frac{(d \delta)^6}{4} \right]^{t/2} \left[ 1 + \frac{1}{3} (d \delta)^4 \right]^t \int_{B_{\delta}(\vec 0)} e^{-\frac{t}{2} \vec \theta^T M \vec \theta} \ud \vec \theta.  \label{gaussian_integral}
  \end{align} 
To verify \eqref{final_theorem_goal_1} and thus complete the proof, by \eqref{gaussian_integral} and \eqref{final_theorem_goal_2} it suffices to show that 
  \begin{align}
     & [1 - e^{-\frac{t}{2} (D_1\delta)^2}]^{d/2} \left( \frac{2 \pi}{t} \right)^{\frac{d}{2}} g^{\frac{g}{2} \binom{k}{2}} \nonumber \\
     & \qquad \quad \leq \int_{B_{\delta}(\vec 0)} e^{-\frac{t}{2} \vec \theta^T M \vec \theta} \ud \vec \theta \nonumber \\
     & \qquad \quad \leq  [1 - e^{ -t (D_2 \delta)^2}]^{d/2} \left( \frac{2 \pi}{t} \right)^{\frac{d}{2}} g^{\frac{g}{2} \binom{k}{2}} \label{last_reduction_really}
  \end{align}
so we now turn our attention to the integral in the middle.

We recall from Proposition \ref{unsatisfying_transformation} that there is a symmetric matrix $P$ such that $P^2 = M$. Since $B_{\delta}(\vec 0) = [-\delta, \delta]^d$, we have
  \[\int_{B_{\delta}(\vec 0)} e^{-\frac{t}{2} \vec \theta^T M \vec \theta} \ud \vec \theta =  \int_{[-\delta, \delta]^{d}} e^{-\frac{1}{2} (\sqrt{t} P\vec \theta)^T (\sqrt{t} P \vec \theta)} \ud \vec \theta  \]
so if we apply a change of variables with $\vec \eta = \sqrt{t} P \vec \theta$, we have
  \[ \int_{B_{\delta}(\vec 0)} e^{-\frac{t}{2} \vec \theta^T M \vec \theta} \ud \vec \theta =  \frac{1}{t^{d/2}\det(P)} \int_{P[-\sqrt{t} \delta, \sqrt{t} \delta]^{d}} e^{-\frac{1}{2} \vec \eta^T \vec \eta} \ud \vec \eta . \]
Since the integrand is positive, Proposition \ref{unsatisfying_transformation} also implies that
  \begin{align*}
  & \frac{1}{t^{d/2} \det(P)} \int_{[-D_1 \sqrt{t} \delta, D_1 \sqrt{t} \delta]^{d}} e^{-\frac{1}{2} \vec \eta^T \vec \eta} \ud \vec \eta \\
  & \qquad \quad < \int_{B_{\delta}(\vec 0)} e^{-\frac{t}{2} \vec \theta^T M \vec \theta} \ud \vec \theta \\
  & \qquad \quad < \frac{1}{t^{d/2} \det(P)} \int_{[-D_2 \sqrt{t} \delta, D_2 \sqrt{t} \delta]^{d}} e^{-\frac{1}{2} \vec \eta^T \vec \eta} \ud \vec \eta.
  \end{align*}
Because $\vec \eta^T \vec \eta = \sum \eta_{\{i,j\},a}^2$, we can regard the integrals in the lower and upper bounds as the product of $d$ integrals of the form $\int e^{-\frac{1}{2} x^2} \ud x$.  Using the estimates in Lemma \ref{gaussian_estimate} gives
  \begin{align*}
  &  \frac{1}{t^{d/2}\det(P)} \left( \sqrt{2 \pi(1 - e^{- \frac{1}{2} t (D_1 \delta)^2})} \right)^{d} \\
  & \qquad \quad < \int_{B_{\delta}(\vec 0)} e^{-\frac{t}{2} \vec \theta^T M \vec \theta} \ud \vec \theta   \\
  & \qquad \quad < \frac{1}{t^{d/2} \det(P)} \left( \sqrt{2 \pi(1 - e^{- t (D_2 \delta)^2})} \right)^{d} 
  \end{align*}
and since Proposition \ref{matrix_det} shows that $\det(P) = \sqrt{\det(M)} = g^{-\frac{1}{2} g \binom{k}{2}}$, this yields \eqref{last_reduction_really} and completes the proof.
\end{proof}

We pause to remark that if $g$ is even and $t/g$ is odd, then Propositions \ref{useful_integral} and \ref{remainder_bound} show that 
  \( 
    \PB(X_t = \vec 0) \leq \exp \left( - \frac{11}{192} g^{-k} t \delta^2 \right) .
  \)
This hints at the fact that there are no difference matrices over cyclic groups with such parameters \cite{drake}; however, as stated, it does not actually constitute a proof of that result (even asymptotically), since the $g^{kt}$ factor found in \eqref{walk_correspondence} causes the product $g^{kt} \exp (-\frac{11}{192} g^{-k} t \delta^2)$ not to converge to $0$. This result could potentially be obtained by tightening the error term estimate, but such endeavors are not necessary for our purposes.

\begin{proof}[Proof of Theorem \ref{main_theorem}]

Let $g \geq 2$ be fixed. Suppose $(k, t)$ is a sequence of ordered pairs such that $k \geq 3$, $t \to \I$, each value of $t$ is a positive integer multiple of $g$, and there exists some $\varepsilon_0 > 0$ so that $k < (\frac 1 6 - \varepsilon_0) \frac{\log(t)}{\log(g)}$ for all pairs in the sequence. We recall that if $g$ is even and $\lambda = t/g$ is odd, then there is no $(g, k; \lambda)$-difference matrix over $\Z_g$ \cite{drake}; we therefore assume that $g$ is odd or that $\lambda$ is even for every pair in the sequence. We define the sequences $\delta, \varepsilon$ by
$$k =  \left( \frac 1 6 - \varepsilon \right) \frac{\log(t)}{\log(g)}, \qquad \delta = g^{-\frac{5k}{2 - 12 \varepsilon}}.$$
We remark that these definitions imply that $\delta = t^{-5/12}$ and that our assumptions on the sequences $(k, t)$ imply that $\varepsilon \in (\varepsilon_0, 1/6)$. With these definitions, we have three goals: we wish to argue that the hypotheses of Theorem \ref{return_prob_estimates_1} hold for all but finitely many pairs $(k, t)$, that the bracketed components inside $U$ and $L$ tend to $1$, and that  in \eqref{return_prob_upper} and \eqref{return_prob_lower} the exponential error terms become small in comparison to the coefficients on the $U$ and $L$ terms.

First, we verify that $\delta < \frac 8 5 g^{-k-3} k^{-2}$. If the sequence of $k$ values is bounded above, then $\delta = t^{-5/12}$ will certainly be less than $g^{-k-3} k^{-2}$ for sufficiently large $t$. If the sequence of $k$ values is unbounded, then we note that
$$\delta = g^{-\frac{5k}{2 - 12 \varepsilon}} = g^{-\left(k + \frac{3 + 12 \varepsilon}{2 - 12 \varepsilon} k \right)} < g^{-\left(k + \frac 3 2 k + 6 \varepsilon_0 k \right)}$$
and because $k > 2$ we have $\delta < g^{-k-3}g^{-6 \varepsilon_0 k}$, which is smaller than $g^{-k-3} \left(\frac 8 5 k^{-2} \right)$ for sufficiently large $k$. Also, since $k < \log(t)$, we see that
$$t (d \delta)^3 = t (g-1)^3 k^3 (k-1)^3 \delta^3 / 8 < t g^3 [\log(t)]^6 t^{-5/4}/8,$$
which is less than $2$ for sufficiently large $t$. Hence, we have verified that the hypotheses of Theorem \ref{return_prob_estimates_1} hold for all but finitely many pairs $(k, t)$.

Next, we consider the factors of $L$ and $U$. Since $t^2 (d \delta)^6 = (g-1)^6 \binom k 2 ^6 t^{-1/2} < (g-1)^6 (\log(t))^{12} t^{-1/2} \to 0$ as $t \to \I$, we have $[1 + t^2 (d \delta)^2]^{-1/2} \to 1$. On the other hand, because $t (d \delta)^6 < (g-1)^6 (\log(t))^{12} t^{-3/2} \to 0$, it follows that $[1 + \frac 1 4 (d \delta)^6]^{t/2} \to 1$. Similarly, since $t (d \delta)^4 < (g-1)^4 (\log(t))^8 t^{-2/3} \to 0$, the $[1 - \frac 1 3 (d \delta)^4]^t$ and $[1 + \frac 1 3 (d \delta)^4]^t$ terms each tend to $1$. Finally, since the constants $D_1$ and $D_2$ depend only upon $g$, we see that 
$$d e^{-\frac t 2 (D_1 \delta)^2} \leq g (\log(t))^2 e^{- \frac 1 2 D_1^2 t^{1/6}} \to 0, \qquad d e^{-t (D_2 \delta)^2} \leq g (\log(t))^2 e^{- D_2^2 t^{1/6}} \to 0$$
whence $[1 - e^{\frac t 2 (D_1 \delta)^2}]^{d/2}$ and $[1 - e^{-t (D_2 \delta)^2}]^{d/2}$ both tend to $1$. Therefore, as $t \to \I$, $L(g, k, t, t^{-5/12})$ and $U(g, k, t, t^{-5/12})$ both converge to $1$.

Finally, from \eqref{return_prob_upper} and \eqref{return_prob_lower}, we see that 
\begin{align*}
  \liminf_{ t \to \I} \frac{\PB(X_t = \vec 0)}{\left[ \frac{g^{\frac{g}{2} \binom{k}{2} + \binom{k-1}{2}}}{(2 \pi t)^{d/2}} \right]} & \geq \lim_{t \to \I} \left[ L(g, k, t, t^{-\frac{5}{12}}) - \frac{e^{-\frac{11}{192} g^{-k} t \delta^2}}{\left[ \frac{g^{\frac{g}{2} \binom{k}{2} + \binom{k-1}{2}}}{(2 \pi t)^{d/2}} \right]} \right], \\
  \limsup_{ t \to \I} \frac{\PB(X_t = \vec 0)}{\left[ \frac{g^{\frac{g}{2} \binom{k}{2} + \binom{k-1}{2}}}{(2 \pi t)^{d/2}} \right]} & \leq \lim_{t \to \I} \left[ U(g, k, t, t^{-\frac{5}{12}}) + \frac{e^{-\frac{11}{192} g^{-k} t \delta^2}}{\left[ \frac{g^{\frac{g}{2} \binom{k}{2} + \binom{k-1}{2}}}{(2 \pi t)^{d/2}} \right]} \right] 
\end{align*}
and because $L, U \to 1$, if we can show that 
\begin{equation} \label{one_down_and_three_point_six}
  \frac{e^{-\frac{11}{192} g^{-k} t \delta^2}}{\left[ \frac{g^{\frac{g}{2} \binom{k}{2} + \binom{k-1}{2}}}{(2 \pi t)^{d/2}} \right]} \to 0
\end{equation}
then combining the above with \eqref{walk_correspondence} and substituting $t = \lambda g$ will complete the proof. If the sequence of $k$ values is bounded above, then the term in \eqref{one_down_and_three_point_six} is at most
$$ C_1 e^{-C_2 t \delta^2} t^{C_3} = C_1 e^{-C_2 t^{1/6}} t^{C_3}$$
for positive constants $C_1, C_2, C_3$; consequently, this term tends to $0$ as $t \to \I$. On the other hand, suppose that $k \to \I$; in particular, assume that $k \geq 4$. Then
$$\frac{(2 \pi)^{d/2}}{g^{\frac g 2 \binom k 2 + \binom{k-1}{2}}} = \left( \frac{(2 \pi)^{\frac{g-1}{2}}}{g^{\frac g 2 + \frac{k-2}{k}}} \right)^{\binom k 2} \leq \left( \frac{(2 \pi)^{g-1}}{g^{g+1}} \right)^{\binom k 2 / 2}.$$
If $g \geq 7$, then clearly $\frac{(2 \pi)^{g-1}}{g^{g+1}} < 1$, and it can be easily verified that this also holds for $g = 2, \dots, 6$. Hence, the fraction in \eqref{one_down_and_three_point_six} is at most
$t^{d/2} \exp \left(-\frac{11}{192}g^{-k} t \delta^{2}\right),$
and since $g^k < t^{1/6 - \varepsilon_0}$, the aforementioned fraction is at most
$ \exp \left(\frac d 2 \log(t) - \frac{11}{192} t^{\varepsilon_0 - 1/6} t^{1/6}\right) < \exp \left((\log(t))^3/2 - \frac{11}{192} t^{\varepsilon_0} \right)$,
which tends to $0$ as $t \to \I$. This verifies \eqref{one_down_and_three_point_six}, as desired.
\end{proof}

\section{Conclusion} \label{S: conclusion}

By adopting the perspective and tactics of random walks to the problem of difference matrices over cyclic groups, we have developed a formula for the asymptotic number of such matrices as the number of columns (or equivalently, the row inner product $\lambda$) grows large. There are a number of related projects which require further efforts that we leave for future work. One immediate question is what occurs when the underlying cyclic group is replaced with an arbitrary group; most of the changes required to the proof would be to the latter half of Section \ref{S: anatomy}. 

We also note the relationship between difference matrices and orthogonal arrays, as defined in \cite{crc}*{6.1}. Any $(g, k; \lambda)$-difference matrix over $\Z_g$ can be used to construct an $\textrm{OA}_{\lambda}(k,g)$ \cite{crc}*{Rmk 17.7}; conversely, any $\textrm{OA}_{\lambda}(k,g)$ can be viewed as a $(g, k; \lambda g)$-difference matrix over an arbitrary group $G$ of order $g$ \cite{crc}*{Thm 17.10}. These facts intertwine the number of orthogonal arrays and the number of difference matrices in such a way that one can obtain crude estimates on the number of orthogonal arrays. However, these estimates do not yield the exact asymptotics for the number of such arrays, which could be obtained by reinventing the Fourier analysis in this work for those designs. (We remark that Kuperberg, Lovett, and Peled have already completed this analysis for orthogonal arrays with no repeated columns \cite{klp}.)

Another common direction for this type of work is to find bounds that guarantee the existence of $(g, k; \lambda)$-difference matrices over $\Z_g$. For any suitable configuration of $g, k, t, \delta$ such that the expression for $\PB(X_t = \vec 0)$ in \eqref{return_prob_lower} is positive, the existence of a $(g, k; t/g)$-difference matrix over $\Z_g$ is assured. The estimates provided in this work seem not to be sufficient to provide a nontrivial bound of this type, but many of these estimates could be greatly improved with some effort, perhaps to an extent that would yield a nontrivial lower bound on the probability. Existence questions of difference matrices remain an active area of research, and while any bounds on parameters obtained in this way would likely be far from optimal, they may nonetheless be novel. 

In this work, we have also ignored the question of equivalence classes of difference matrices. Commonly, two difference matrices are regarded as equivalent if one can be obtained from another by exchanging rows or columns, rotating an entire row or column by a group element, or applying an automorphism of the underlying group to every element in the matrix. Such actions are difficult to capture with the random walk enumeration scheme described in this paper, and the questions of counting the raw number of matrices and counting the equivalence classes are nontrivially different when rows or columns can be repeated. 

Finally, we remark that enumeration and existence results are each perhaps most interesting in the case when $\lambda$ is small. To illustrate, we recall that in order for a $(g, 3; 1)$-difference matrix over $\Z_g$ to exist, it is necessary for $g$ to be odd \cite{drake}; however, this condition is also sufficient \cite{ge}. The general question of the existence of a $(g, k; 1)$-difference matrix over $\Z_g$ with $g$ odd and $k \geq 4$ remains open. For example, it is known that both a $(5, 4; 1)$-difference matrix and a $(7, 4; 1)$-difference matrix over cyclic groups exist \cite{evans}, and a computer search has shown that a $(9, 4; 1)$-difference matrix over $\Z_9$ does not \cite{ge}, which illustrates that the $k = 4$ case is not as tidy as the $k = 3$ case. In principle, combining \eqref{fourier_inversion} with \eqref{walk_correspondence} shows that one can obtain the the exact number of $(g, k; \lambda)$-difference matrices over $\Z_g$ by evaluating an integral, so one might hope to obtain interesting results about relatively small $\lambda$ by minimizing all the error terms in the preceding Fourier analysis. Analogously, one might hope to use this sort of tactic to resolve the Hadamard conjecture, or to count the number of Steiner triple system incidence matrices. Of course, such efforts have thus far fallen short of those lofty goals, but interesting existence and enumeration results of this nature have been derived for other combinatorial designs. While it is likely too much to hope that this Fourier analysis can address the enumeration or existence of $(g, k; \lambda)$-difference matrices over $\Z_g$ when $k = g \lambda$, perhaps it can yield results when $\lambda$ grows slowly as a function of $k$.

\section{Acknowledgements}

The author would like to extend sincere appreciation to the anonymous reviewer of this work for the immensely helpful comments and careful attention to detail.

\bibliography{montgomery}

\end{document}